\documentclass{article}
\usepackage{amsmath,amssymb,latexsym,amsthm,color}

\newcommand{\KK}{\mathbb{K}}
\newcommand{\QQ}{\mathbb{Q}}
\newcommand{\RR}{\mathbb{R}}
\newcommand{\NN}{\mathbb{N}}

\newcommand{\LL}{\mathbb{L}}

\newcommand{\coker}{\mathrm{coker}\kern.5pt}

\theoremstyle{plain}
\newtheorem{theorem}{Theorem}[section]
\newtheorem{corollary}[theorem]{Corollary}
\newtheorem{proposition}[theorem]{Proposition}
\newtheorem{lemma}[theorem]{Lemma}

\theoremstyle{definition}

\newtheorem{example}[theorem]{Example}

\newtheorem{remark}[theorem]{Remark}

\def \e {\epsilon}

\begin{document}

\title{ The intermediate value theorem over a non-Archimedean field via Hensel's Lemma}

\author{L. Corgnier, C. Massaza, P. Valabrega}
 
\date{}

\maketitle

\begin{abstract} The paper proves that all power series over a maximal ordered Cauchy complete non-Archimedean field satisfy the intermediate value theorem on every closed interval. Hensel's Lemma for restricted power series is the main tool of the proof.
\end{abstract}

\textbf{Keywords:}  non-Archimedean fields, power series, intermediate value, Hensel's Lemma

\textbf{MSC\,2010:} 12J15.

\section{Introduction}
It is well-known that, over a complete Archimedean field (i.e. the field $\RR$ of real numbers), the intermediate value theorem holds for every function continuous on a closed interval. The proof is strongly based on the Archimedean property, or better on the dichotomy procedure, which is  a consequence of it.

Over a non-Archimedean field $\KK$ the theorem is false for the class of all continuous functions (see \cite{Corgnier}, Example 4.1), even in the event that $\KK$ is maximal ordered and Cauchy complete. Nevertheless long ago  it has been proved for polynomials and rational functions  defined over a maximal ordered, not necessarily complete, non-Archimedean field (see \cite{Bourbaki A}, \textsection 2, n. 5, Proposition 5).

In \cite{Corgnier} we proved that, when $\KK$ is maximal ordered, the theorem can be extended to any power series algebraic over the field $\KK(X)$ of rational functions, with a direct proof that makes use of  the theorem for polynomials. 

In the present paper we investigate the intermediate value theorem for an arbitrary power series over a non-Archimedean maximal ordered and complete field $\KK$. In order  to attain the result we use Hensel's Lemma, in its strong version for the ring of restricted power series.  In fact we apply it to the valuation ring of all elements of $\KK$ that are not infinitely large with respect to $\QQ$ (or to any other subfield $\LL$ which $\KK$ is non-comparable with). Since the maximal ideal of infinitesimals may contain no topologically nilpotent element, we need the stronger version of Hensel's Lemma proved in \cite{Valabrega}.
 
We  also show  that each root of odd order of a power series is the  limit of a sequence of roots of the partial sums  (the order, or multiplicity, being defined in \cite{Corgnier}, Theorem 3.11). The even order behaves quite differently; in fact it may occur, and sometimes it occurs, that the root of the series is only the limit of a sequence of extremes of the partial sums.

It is worth observing that, while in \cite{Corgnier} we proved that {\it{algebraic}} power series satisfy the intermediate value theorem on a closed interval of a {\it{maximal ordered}} field, here we prove that {\it{general}} power series have their zeros in a {\it{maximal ordered and complete}} field.

As for the cardinality of the set of zeros, we show that, on the whole field $\KK$, it may occur that there are infinitely many roots, but they shrink to finitely many on the subring of the  elements that are not infinitely large with respect to a subfield which $\KK$ is non-comparable with.

We want to recall that the theory of non-Archimedean ordered fields goes back to the nineteenth century and was introduced by Veronese and Levi-Civita (see \cite{Levi-CivitaV}, \cite{Levi-CivitaL}, and also \cite{Berz} and \cite{Shamseddine-Berz2}). As for the intermediate value theorem  on the  Levi-Civita field of functions from $\QQ$ to $\RR$ with left-finite support, a proof is given in \cite{Shamseddine-Berz}, while  in \cite{Laugwitz} and in \cite{Robinson} a quite different point of view is considered.

\section{Notation and general facts}
Unless otherwise stated, $\KK$ is a {\it{non-Archimedean, maximal ordered, complete field}} (for ordered fields and completions in general we refer the reader to \cite{Bourbaki A}, \cite{Lang} and \cite{Waerden}).  We only recall that $\KK$ is called {\it{maximal ordered}} if every ordered algebraic extension of $\KK$ coincides with $\KK$ (\cite{Bourbaki A}, \textsection 2, n. 5, D\' efinition 4) and that $\KK$ is maximal ordered if and only if every positive element of $\KK$ is a square and moreover every odd degree polynomial  over $\KK$ has a root in $\KK$ (\cite{Bourbaki A}, \textsection 2, n. 6, Th\' eor\`eme 3).  

We assume once for all that the order topology has a countable basis for the neighbourhoods of $0$ (see \cite{Kelley}, ch. I, p. 50 and \cite{Massaza}, X, p. 335). An element $x \in \KK$ is {\it{topologically nilpotent}} if $\lim x^n = 0$ (see \cite{Greco}, p. 19).

The following cases can occur (see \cite{Corgnier}, \textsection 3):

\begin{enumerate}

\item there is a topologically nilpotent element $\epsilon \in \KK$,

\item  there is a sequence $(\e_0 > \epsilon_1 > \epsilon_2 > \cdot \cdot \cdot )$, converging to $0$,  such that $\forall n,\epsilon_n > 0$ and $\forall n, \forall i, \epsilon_n^i > \epsilon_{n+1}$. We always  choose $\e_0 = 1$. 

\end{enumerate} 

In case 1. the sets $U_n = \{x \in \KK, |x| < \e^n\}$ give a basis for the neighbourhoods of $0$, while in case 2. we need the sets $\bar U_n = \{x \in \KK, |x| < \e_n\}$.

For every non-Archimedean ordered field $\LL, \bar \LL$ denotes the ordered closure and $\hat \LL$ the Cauchy completion.

If $S(X) = \sum a_nX^n$ is a power series over $\KK$, we denote by $S_n(X)$ the $n$-th partial sum $\sum_{i = 0}^n a_iX^i$.

If $\LL \subset \KK$ is a  subfield such that  $\KK$ contains at least one element $x$ larger than every $a \in \LL$, we say that $\KK$ is non-comparable with $\LL$  (or non-Archimedean over $\LL$). For instance, $\KK$ is non-comparable with $\QQ$. 

An element $x \in \KK$ is called infinitely large with respect to $\LL$ if $|x| > a, \forall a \in \LL^+$ (set of all positive elements). An element $y \in \KK, y \ne 0$ is called infinitely small (or infinitesimal) with respect to $\LL$ if $|y| < a, \forall a \in \LL^+$. If $x$ is infinitely large, then $\frac{1}{x}$ is infinitely small and conversely.
When {\it{$\LL = \QQ$ we simply say that $x$ is infinitely large and $y$ is infinitely small or infinitesimal. }}
If $x \in \KK$ is algebraic over the subfield $\LL$, then $x$ is neither infinitely large nor infinitely small with respect to $\LL$ (see \cite{Bourbaki A}, Exercise 14, p. 57). The same is true for every $y \in \hat L$ (as a consequence of the definition of order in the completion, see \cite{Waerden}, 67). 

Given $\LL \subset \KK$ such that $\KK$ is non-comparable with it, we set: 

$A_{\LL} = \{x \in \KK, x\ \text{is not infinitely large with respect to}\ \LL\}$, 

$M_{\LL} = \{x \in \KK, x\ \text{is infinitely small with respect to}\ \LL\}.$

Then $A_{\LL}$ is a subring of $\KK$ (\cite{Bourbaki A}, Exercise 1 p. 53) and  $M_{\LL}$ is a maximal ideal of $A_{\LL}$ (\cite{Bourbaki A}, Exercise 11, b), p. 57). Moreover  $A_{\LL}$ is a valuation ring, since either $x $ or $\frac{1}{x}$ belongs to $A_{\LL}$ (\cite{BourbakiEV},  chap. VI, n. 2, Th\' eor\`eme 1).

\begin{remark}  Let us assume that there is  a sequence $(\epsilon_n)$ converging to $0$ but not a single $\epsilon$ such that $\lim \epsilon^n = 0$. 

(a) Given a subfield $\LL$ over which $\KK$ is non-Archimedean, we can assume that all the $\epsilon_n$'s (except $\e_0 = 1$) are infinitesimal  with respect to  $\LL$ (by possibly discarding finitely many of them). 

It follows that every $\omega_n = \frac{1}{\e_n}, n \ge 1,$ is infinitely large with respect to $\LL$.

(b) It holds: 
 $ \epsilon_1 \epsilon_n > \epsilon_{n+1}$, since $\epsilon_1 > \epsilon_n$ implies $\epsilon_1 \epsilon_n > \epsilon_n^2 > \epsilon_{n+1}$ (this inequality actually follows from 2. above).

(c) It is easy to see that each $\e_{i+1}$ is transcendental over $\LL(\e_1,\e_2,\cdot \cdot \cdot,\e_i) \subset \KK$, since algebraic elements cannot be infinitesimal.

\end{remark}

\begin{remark}If there is a topologically nilpotent $\epsilon$, then $\e$  does not belong to every $\LL$  which $\KK$ is non-comparable with. In fact,  $\e \in \LL$ impliest $\e^n \in \LL, \forall n$, in contradiction with the non-Archimedean assumption on $\KK$ (choose $x \in \KK$  infinitely small with respect to $\LL$, so less than every $\epsilon^n$:  $x$ must be $0$).
\end{remark}

A local ring $(A,M)$,  is called henselian if the following property holds:

Let $P(X) \in A[X]$ be a polynomial such that its canonical image $\bar {P}(X)$ into the quotient  ring $\frac{A}{M}[X]$ is the product $\bar{Q}(X) \bar{T}(X)$ of a monic polynomial $\bar{Q}(X)$ and another polynomial $\bar{T}(X)$, the two factors being coprime. Then $P(X) = Q(X)T(X)$, where $Q(X)$ is a monic polynomial that lifts $\bar{Q}(X)$ and $T(X)$ is a polynomial that lifts $\bar{T}(X)$. Moreover $P(X),Q(X)$ are uniquely determined and coprime.

Hensel's Lemma states that a complete local ring is henselian (see \cite{Nagata}, ch. V, \textsection 30, p. 103). There is a wide class of henselian rings (see for instance \cite{Greco2}).

\bigskip
A monic polynomial $X^r+\cdot \cdot \cdot +c_1X+c_0$ is an {\it{N-polynomial}} if $c_0 \in M, c_1 \notin M$. A local ring is {\it{henselian}} if and only if every N-polynomial has a root in $M$ (\cite{Nagata}, ch. V, \textsection 43, p. 179 and also \cite{Greco2}, Theorem 5.11).

If $S(X)$ is defined over a topological ring $A$ with a linear topology, i.e. with a basis of the neighbourhoods of $0$ formed with ideals, $S(X)$ is called {\it{restricted }} if $\lim a_n = 0$ (see \cite{Greco}, p. 18).

Hensel's Lemma can also be given for restricted power series (see \cite{Bourbaki AC}, III, \textsection 4, n. 3 and \cite{Greco}, Theorem 3.7, p. 19).

\begin{lemma} \label {Hensel} (Hensel's Lemma for restricted power series) Let $A$ be a complete separated ring with respect to a linear topology, $M$ a closed ideal whose elements are topologically nilpotent, $S(X)$ a restricted power series such that its canonical image $\bar {S}(X)$ into the topological quotient  ring $\frac{A}{M}\{X\}$ is the product $\bar{P}(X) \bar{T}(X)$ of a monic polynomial $\bar{P}(X)$ and a restricted series $\bar{T}(X)$, the two factors being coprime. Then $S(X) = P(X)T(X)$, where $P(X)$ is a monic polynomial that lifts $\bar{P}(X)$ and $T(X)$ is a restricted series that lifts $\bar{T}(X)$. Moreover $P(X),T(X)$ are uniquely determined and coprime.
\end{lemma}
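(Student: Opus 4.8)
The plan is to prove the lemma by the classical Hensel successive-approximation argument, carried out inside the ring $A\{X\}$ of restricted power series and with the linear topology kept under control. Two preliminary facts are needed. (i) In a complete, separated, linearly topologized ring a series $\sum u_n$ converges as soon as $u_n\to 0$, because then the partial sums are Cauchy modulo every open ideal; in particular, if $z\in A\{X\}$ has all its coefficients topologically nilpotent then $z^k\to 0$ in $A\{X\}$ (one checks this coefficient-wise, using that only finitely many coefficients lie outside a given open ideal and that each of them is topologically nilpotent), so $1-z$ is a unit of $A\{X\}$. (ii) Division with remainder: for every restricted $G\in A\{X\}$ and every monic $P\in A[X]$ of degree $r$ there are a unique restricted $Q\in A\{X\}$ and a unique polynomial $R\in A[X]$ with $\deg R<r$ and $G=QP+R$; this follows by repeatedly eliminating the top-degree coefficient of $G$ against the leading term $X^r$ of $P$, the procedure converging because the coefficients of $G$ tend to $0$, and $G\mapsto(Q,R)$ is $A$-linear. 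As a by-product, a monic polynomial of degree $r$ is not a zero-divisor in $A\{X\}$ and cannot divide a non-zero polynomial of degree $<r$.

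Now set $r=\deg\bar P$, choose a monic lift $P_0\in A[X]$ of $\bar P$ of degree $r$ and a restricted lift $T_0\in A\{X\}$ of $\bar T$, and lift a Bézout relation for the coprime pair $(\bar P,\bar T)$ to $U_0\in A[X]$ with $\deg U_0<r$ and $V_0\in A\{X\}$ restricted, so that $U_0P_0+V_0T_0=1-z_0$ with all coefficients of $z_0$ in $M$. I then build monic polynomials $P_n$ of degree $r$ and restricted series $T_n$ recursively: given $\Delta_n:=S-P_nT_n$, divide $V_0\Delta_n=\kappa_nP_n+\pi_n$ with $\deg\pi_n<r$, put $\tau_n:=U_0\Delta_n+\kappa_nT_n$, and set $P_{n+1}=P_n+\pi_n$ (still monic of degree $r$) and $T_{n+1}=T_n+\tau_n$. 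Writing $U_0P_n+V_0T_n=1-\zeta_n$ (so $\zeta_0=z_0$, and $\zeta_n=z_0-U_0\sum_{j<n}\pi_j-V_0\sum_{j<n}\tau_j$), a direct computation gives $\Delta_{n+1}=\zeta_n\Delta_n-\pi_n\tau_n$. Hence, letting $\mathfrak n$ be the ideal of $A$ generated by the coefficients of $\Delta_0$ and of $z_0$ — all of which lie in $M$ — an induction shows that the coefficients of $\Delta_n$, of $\pi_n$ and of $\tau_n$ all lie in $\mathfrak n^{\,n+1}$.

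Since $\mathfrak n$ is generated by the coefficients of two restricted series lying in $M$, one has $\mathfrak n^{\,k}\to 0$ in $A$ (again: for a given open ideal all but finitely many generators already lie in it, and the remaining finitely many are topologically nilpotent, so a high enough power of any product of generators lies in it). Therefore $\Delta_n\to 0$ in $A\{X\}$, the series $\sum\pi_n$ and $\sum\tau_n$ converge, and $P:=P_0+\sum\pi_n$, $T:=T_0+\sum\tau_n$ are respectively a monic polynomial of degree $r$ and a restricted series; by continuity of multiplication $S=\lim P_nT_n=PT$, and reduction modulo $M$ gives $\bar P$ and $\bar T$. Passing $U_0P_n+V_0T_n=1-\zeta_n$ to the limit yields $U_0P+V_0T=1-z$ with the coefficients of $z$ in $M$, hence topologically nilpotent; by (i), $1-z$ is a unit of $A\{X\}$, so $P$ and $T$ are coprime. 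For uniqueness, suppose also $S=P'T'$ with $P'$ monic of degree $r$ lifting $\bar P$ and $T'$ restricted lifting $\bar T$. Then $P'$ and $T'$ are coprime by the same argument, so from $P(T-T')=(P'-P)T'$ one gets $P\mid P'-P$; since $\deg(P'-P)<r$ this forces $P=P'$, and then $P(T-T')=0$ forces $T=T'$.

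The genuinely delicate point is not the algebra, which is the usual Hensel bookkeeping, but the topology: one must (a) make sure the division algorithm really produces a \emph{restricted} quotient, and (b), above all, guarantee that the successive corrections tend to $0$ coefficient-wise. There is no $M$-adic estimate available here, contrary to the familiar case of a Noetherian complete local ring; what makes the iteration converge is precisely the combination of the hypotheses — $A$ complete and separated for a \emph{linear} topology and $M$ a \emph{closed} ideal whose elements are \emph{topologically nilpotent} — which forces the powers $\mathfrak n^{\,k}$ of the ``error ideal'' to tend to $0$.
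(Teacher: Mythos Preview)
The paper does not prove this lemma at all: it is stated as background, with a reference to Bourbaki, \emph{Alg\`ebre Commutative} III, \S4, n.~3 and to Greco--Salmon, Theorem~3.7. So there is no ``paper's own proof'' to compare against; you have supplied a self-contained argument where the authors simply quote the literature.

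Your proof is the classical successive-approximation argument and is essentially correct. Two small points deserve tightening. First, in the uniqueness step you deduce $P\mid P'-P$ from $P(T-T')=(P'-P)T'$ by invoking coprimality of $P'$ and $T'$; what you actually need is coprimality of $P$ and $T'$. This follows easily: from $U_0P+V_0T=1-z$ and $T'-T\in M\{X\}$ one gets $U_0P+V_0T'=1-z+V_0(T'-T)$, and the right-hand side is a unit by your fact~(i). Second, the induction ``coefficients of $\Delta_n,\pi_n,\tau_n$ lie in $\mathfrak n^{\,n+1}$'' is not quite stable under the division step, because the remainder $R$ in $G=QP+R$ is obtained as a limit of partial sums and hence its coefficients lie only in the \emph{closure} of the ideal containing the coefficients of $G$. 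The clean fix is to work with the closed ideals $J_k:=\overline{\bar{\mathfrak n}^{\,k}}$: one checks $J_kJ_l\subset J_{k+l}$ by continuity of multiplication, and your argument that $\bar{\mathfrak n}\subset I+(g_1,\dots,g_m)$ for every open ideal $I$ (the right-hand side being open, hence closed) still gives $J_k\subset I$ for $k$ large. With these adjustments the argument goes through.
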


 \section{Non-comparable subfields and some topology }

In this section we investigate some topological properties of the valuation ring of non-infinitely large elements that follow from the ordering of the field $\KK$, with special focus on topologically nilpotent elements.

\begin{lemma} \label{nilpotent} If there is a topologically nilpotent element $\epsilon$, then $X$ contains a maximal element, which is both maximal ordered and Cauchy complete.
\end{lemma}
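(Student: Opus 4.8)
The plan is to exhibit, among the subfields $\LL\subseteq\KK$ over which $\KK$ is non-comparable, a maximal one (for inclusion) that is in addition maximal ordered and Cauchy complete, by applying Zorn's lemma to the family
\[
\mathcal{F}=\{\LL\subseteq\KK\ \text{subfield}:\ \e\ \text{is infinitely small with respect to}\ \LL\},
\]
after checking that $\mathcal{F}$ is precisely the family of subfields over which $\KK$ is non-comparable and that every maximal member of $\mathcal{F}$ is automatically maximal ordered and Cauchy complete.

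First I would record two elementary facts: $0<|\e|<1$ (otherwise $\e^n\not\to 0$), and the sets $U_m=\{x\in\KK:|x|<\e^m\}$ form a basis of neighbourhoods of $0$. Since $\KK$ is non-Archimedean there is a $\QQ$-infinitesimal $\delta>0$; the neighbourhood $\{|x|<\delta\}$ contains some $U_m$, so $\e^m\le\delta$, and then $\e\ge 1/n$ for some $n\in\NN$ would give the absurdity $1/n^m\le\e^m\le\delta<1/n^m$. Hence $\e$ is infinitely small with respect to $\QQ$, so $\QQ\in\mathcal{F}$ and $\mathcal{F}\neq\emptyset$. The same mechanism shows $\mathcal{F}$ equals the family of subfields $\LL$ over which $\KK$ is non-comparable: on one side $1/\e$ is infinitely large with respect to every $\LL\in\mathcal{F}$; on the other, given an $\LL$-infinitesimal $y>0$ one picks $m$ with $\e^m\le y$, and then $\e^m\le y<a^m$ for every $a\in\LL^+$, whence $\e<a$ (else $\e\ge a$ would force $\e^m\ge a^m$).

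Next, the union $\LL$ of a chain in $\mathcal{F}$ is a subfield, and any $a\in\LL^+$ lies in some member of the chain, so $\e<a$ and $\LL\in\mathcal{F}$; hence Zorn's lemma supplies a maximal $\LL_0\in\mathcal{F}$. To see $\LL_0$ is maximal ordered, I would consider the relative algebraic closure $\bar{\LL_0}$ of $\LL_0$ in $\KK$: it is real closed, because every positive element of $\bar{\LL_0}$ is a square in $\KK$ and every odd-degree polynomial over $\bar{\LL_0}$ has a root in $\KK$, and these square roots and roots, being algebraic over $\LL_0$, already lie in $\bar{\LL_0}$. Moreover every element of $\bar{\LL_0}$ is algebraic over $\LL_0$, hence neither infinitely large nor infinitely small with respect to $\LL_0$, so each positive $a\in\bar{\LL_0}$ satisfies $a\ge c$ for some $c\in\LL_0^+$, giving $\e<c\le a$; thus $\bar{\LL_0}\in\mathcal{F}$, and maximality forces $\LL_0=\bar{\LL_0}$, i.e. $\LL_0$ is maximal ordered. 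The same argument with the closure $\hat{\LL_0}$ of $\LL_0$ in $\KK$ in place of $\bar{\LL_0}$ — this closure is a subfield, it is Cauchy complete because $\KK$ is, $\LL_0$ is dense in it, and its elements are likewise neither infinitely large nor infinitely small with respect to $\LL_0$ — yields $\hat{\LL_0}\in\mathcal{F}$, hence $\LL_0=\hat{\LL_0}$, so $\LL_0$ is Cauchy complete.

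The step I expect to be the delicate one is the equivalence ``$\KK$ non-comparable with $\LL$ $\iff$ $\e$ infinitely small over $\LL$'', and specifically the passage from $\e^m\le y$ to $\e<a$ for all $a\in\LL^+$: it must go through the $m$-th powers, which are what the neighbourhood basis $U_m$ controls, rather than through $\e$ directly. This is the only place where the hypothesis of topological nilpotency enters, and it is essential: recasting ``non-comparable with $\LL$'' as the single order inequality ``$\e$ infinitely small over $\LL$'' makes the family stable under unions of chains — a stability not apparent from the original description — and so makes Zorn's lemma applicable.
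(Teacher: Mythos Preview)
Your argument is correct and follows essentially the same route as the paper: Zorn's lemma on the family of non-comparable subfields, followed by the observation that the ordered closure and the Cauchy completion of a maximal member stay in the family and hence coincide with it. Your recasting of ``$\KK$ non-comparable with $\LL$'' as ``$\e$ infinitesimal over $\LL$'' is exactly the content the paper extracts from its Remark~2.2 to push the union of a chain back into the family; you have simply made that step (and the closure arguments) more explicit than the paper does.
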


\begin{proof}

The set $X$ of all subfields $\LL \subset \KK$ which $\KK$ is non-comparable with is not empty and can be ordered by inclusion.

First of all we show that,  if $F$ is a subset of $X$, then $\LL' = \cup F$  belongs to $X$.  

In order to prove this property let us recall (Remark 2.2 above) that no $\LL \in X$ can contain $\epsilon$.  
As a consequence $\epsilon \notin \LL'$,  so that  $\KK$ is non-comparable with $\LL'$.

Therefore, by Zorn's lemma, $X$ has a maximal element $\LL$.   Such a subfield is both maximal ordered and complete, since the ordered closure and the completion of an ordered field are comparable with it.
  
 \end{proof}
 
 \begin{proposition} \label{nilpotent2} If $\KK$ contains a topologically nilpotent element $\e$ and $\LL$ is maximal as above, then every element of $M_{\LL}$ is topologically nilpotent.
 \end{proposition}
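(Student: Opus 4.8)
The plan is to reduce the statement to an elementary claim: for every nonzero $x\in M_\LL$ there exist an integer $m\ge 1$ and an element $a\in\LL^{+}$ with $x^{m}\le a\e$. Since $\lim x^{n}=0$ if and only if $\lim|x|^{n}=0$, we may assume $0<x<1$ throughout. Granting the claim, topological nilpotency follows from a short estimate with the neighbourhood basis $U_{j}=\{\,y\in\KK:|y|<\e^{j}\,\}$ of $0$: from $x^{m}\le a\e$ we get $x^{mn}\le a^{n}\e^{n}$, and since $\e$ is infinitesimal with respect to $\LL$ one has $\e<a^{-n}$, so for $n\ge j+1$ (hence $\e^{n-j}\le\e$) we obtain $a^{n}\e^{n}=a^{n}\e^{n-j}\e^{j}\le a^{n}\e\,\e^{j}<\e^{j}$; as $0<x<1$, every exponent $\ell\ge m(j+1)$ then gives $x^{\ell}\le x^{m\lfloor\ell/m\rfloor}<\e^{j}$, i.e. $x^{\ell}\in U_{j}$, whence $\lim x^{\ell}=0$. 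One preliminary fact is needed: $\e$ itself lies in $M_\LL$. Indeed $|\e|<1$ (otherwise $|\e^{n}|\ge 1$), so $\e\in A_\LL$; if $\e\notin M_\LL$ then $\e$ would be a unit of the valuation ring $A_\LL$, giving $1/\e\le a$ for some $a\in\LL^{+}$, hence $\e^{n}\ge a^{-n}>\delta$ for every $\delta$ infinitesimal with respect to $\LL$ (such $\delta$ exists since $\KK$ is non-comparable with $\LL$), contradicting $\lim\e^{n}=0$.

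The heart of the argument is the claim, and the point is to invoke the maximality of $\LL$ in the \emph{correct} direction. Since $x\in M_\LL$ is infinitesimal with respect to $\LL$, it is transcendental over $\LL$, and $x\notin\LL$ (a positive element of $\LL$ cannot be infinitesimal with respect to $\LL$); hence $\LL\subsetneq\LL(x)$, so by maximality of $\LL$ the field $\KK$ is comparable with $\LL(x)$, i.e. no element of $\KK$ is infinitely large with respect to $\LL(x)$. Applying this to the fixed element $\omega:=1/\e\in\KK$ gives $\omega\le g$ for some positive $g\in\LL(x)$. Since $x$ is transcendental, $\LL[x]\cong\LL[T]$ and we may localise at $(x)$ to get a discrete valuation ring $R=\LL[x]_{(x)}$ with uniformiser $x$; write $g=x^{k}u$ with $k\in\ZZ$ and $u$ a unit of $R$. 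Because $x\in M_\LL$, one checks that $R\subseteq A_\LL$, that a unit of $R$ is a unit of $A_\LL$, and that $0<u\le a$ for some $a\in\LL^{+}$. Now $\omega=1/\e$ is infinitely large with respect to $\LL$ (as $\e\in M_\LL$), whereas no element of $A_\LL$ is; so $k\ge 0$ is impossible, since then $g=x^{k}u\in A_\LL$ would bound $\omega$. Hence $k=-m$ with $m\ge 1$, and $\omega\le x^{-m}u\le x^{-m}a$ rearranges to $x^{m}\le a\e$, which is the claim.

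The main obstacle is exactly this choice of direction. The naive attempt --- bounding $1/x$ via comparability of $\KK$ with some $\LL(\e)$ or $\LL(x)$ --- yields only a \emph{lower} estimate $x\gtrsim\e^{m}$, which carries no information about $\lim x^{n}$. One is forced instead to bound the fixed infinitely large element $\omega=1/\e$ in terms of $x$, and this is precisely what drags in comparability of $\KK$ with $\LL(x)$ (not with $\LL(\e)$) and so uses maximality of $\LL$ in an essential, somewhat unexpected way. After the inequality $x^{m}\le a\e$ is secured, everything else is routine power-of-$\e$ bookkeeping as in the first paragraph.
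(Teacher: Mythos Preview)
Your proof is correct and follows essentially the same strategy as the paper's: use maximality of $\LL$ to make $\KK$ comparable with $\LL(x)=\LL(1/x)$, then bound $\omega=1/\e$ by an element of that field and extract from this an inequality of the form $x^{m}\lesssim\e$. The only differences are in execution---the paper adjoins $y=1/x$ and cites a fact about the unique ordering of $\LL(y)$ (every positive rational function is below some $y^{m}$) to obtain directly $x^{m}<\e$, whereas you reach $x^{m}\le a\e$ via the DVR $\LL[x]_{(x)}$, and your verification that $\e\in M_{\LL}$ and your closing nilpotency estimate are spelled out more carefully than the paper's rather compressed final line.
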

 
 \begin{proof} Let $x$ be in $M_{\LL}$. Put: $y = \frac{1}{x}$ and observe that $|y| >  |h|, \forall h \in \LL$, so that $y \notin \LL$, which implies that $y$ is transcendental over $\LL$ (every algebraic element is comparable with $\LL$). Let us assume $y > 0$,  and  consider the field $\LL(y) \subset \KK$. 
Since $\LL$ is maximal with the property that $\KK$ is non-comparable with it,  $\KK$ is comparable with $\LL(y)$, i.e. every $k \in \KK$ is neither infinitely large nor infinitely small with respect to  $\LL(y)$, so that there is a rational function $\frac{P(y)}{Q(y)}$ which is larger than $\omega = \frac{1}{\e}$. But, in the unique ordering of $\LL(y)$ (see \cite{Massaza4}, \textsection 3) every rational function is less than some power $y^m$, so that $\omega < y^m$ for some $m \ge 1$.  Therefore $\lim x^n = \lim \frac{1}{y^m} = 0$.
  \end{proof}
 
 \begin{remark}The above two results state that, if $\KK$ contains a topologically nilpotent element $\epsilon$, then, among all those subfields which $\KK$ is non-comparable with, there is a maximal element $\LL$, giving rise to a local ring $(A_{\LL},M_{\LL})$ whose elements in $M_{\LL}$ are all topologically nilpotent.  
 \end{remark}
  \begin{remark} \label {topology} Let $\LL$ be any subfield with which $\KK$ is non-comparable.  In all cases, with or without topologically nilpotent elements, the ring $A_{\LL}$, equipped with the topology induced by the ordering of $\KK$, is a {\it{topological ring}}, i.e. the functions $(x,y) \to (x+y), (x,y) \to xy$ and $x \to -x$ are continuous. Moreover  it is {\it{Hausdorf}} since, if $|x| \le \e_n (\e^n), \forall n$, then $x = 0$ (see \cite{Greco}, p. 2).  
 
 Observe that $\KK$ is a topological field and every subring is a topological ring.
 \end{remark}

\begin{lemma} \label{linear topology} For every subfield $\LL$ such that  $\KK$ is non-comparable with it, the topology of $A_{\LL}$ is linear, i.e. there is a basis of the neighbourhoods of $0$ whose elements are ideals of $A_{\LL}$.
\end{lemma}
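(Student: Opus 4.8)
The plan is to produce, in each of the two cases of Section~2, a countable family of ideals of $A_{\LL}$ which is sandwiched between consecutive members of the standard neighbourhood basis of $0$, and hence is itself such a basis. Set $\delta_n = \epsilon^n$ in case~1 and $\delta_n = \epsilon_n$ in case~2, so that $0 < \delta_n \le 1$ and the sets $W_n := \{x \in \KK : |x| < \delta_n\}$ (these are the $U_n$, respectively the $\bar U_n$) form a basis of neighbourhoods of $0$ in $\KK$; consequently $\{W_n \cap A_{\LL}\}_{n \ge 1}$ is a basis of neighbourhoods of $0$ in $A_{\LL}$. The ideals I would use are $J_n := \delta_n M_{\LL} = \{\,\delta_n m : m \in M_{\LL}\,\}$, $n \ge 1$. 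Since $0 < \delta_n \le 1$ gives $\delta_n \in A_{\LL}$ and $M_{\LL}$ is an ideal of $A_{\LL}$, each $J_n$ is an ideal of $A_{\LL}$: it is contained in $A_{\LL}$, closed under addition because $\delta_n m + \delta_n m' = \delta_n(m+m')$, and closed under multiplication by $a \in A_{\LL}$ because $a\,\delta_n m = \delta_n(am)$ with $am \in M_{\LL}$.

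The only point needing a genuine argument — and the main obstacle — is the claim that $\delta_{n+1}/\delta_n$ is infinitesimal with respect to $\LL$ for every $n \ge 1$. In case~2 this follows from the conventions of Section~2: for $n \ge 1$, $\epsilon_n^2 > \epsilon_{n+1}$ gives $\delta_{n+1}/\delta_n = \epsilon_{n+1}/\epsilon_n < \epsilon_n$, and $\epsilon_n$ is infinitesimal with respect to $\LL$ by Remark~2.1(a) (after discarding finitely many $\epsilon_n$, which does not change the neighbourhood filter at $0$). In case~1 the claim reads: the topologically nilpotent element $\epsilon$ is infinitesimal with respect to $\LL$ — a statement stronger than Remark~2.2. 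First, $0 < \epsilon < 1$, since $|\epsilon| \ge 1$ would force $|\epsilon^n| \ge 1$ for all $n$, against $\epsilon^n \to 0$. Now suppose, for a contradiction, that $\epsilon \ge c$ for some $c \in \LL^+$, and pick $\omega \in \KK$ infinitely large with respect to $\LL$ (possible since $\KK$ is non-comparable with $\LL$). Then $1/c^n \in \LL^+$, so $\omega > 1/c^n$ and therefore $0 < 1/\omega < c^n \le \epsilon^n$ for every $n$; thus $1/\omega$ lies in all the $U_n$, forcing $1/\omega = 0$ because $\KK$ is Hausdorff — a contradiction. Hence $\epsilon < c$ for every $c \in \LL^+$, i.e. $\epsilon$ is infinitesimal with respect to $\LL$.

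Granting the claim, the sandwich $W_{n+1} \cap A_{\LL} \subseteq J_n \subseteq W_n \cap A_{\LL}$ is routine: if $x \in A_{\LL}$ with $|x| < \delta_{n+1}$, then $|x/\delta_n| < \delta_{n+1}/\delta_n$ is infinitesimal with respect to $\LL$, so $x/\delta_n \in M_{\LL}$ and $x = \delta_n(x/\delta_n) \in J_n$; conversely, if $x = \delta_n m$ with $m \in M_{\LL}$, then $|m| < 1$ (as $1 \in \LL^+$), hence $|x| < \delta_n$, i.e. $x \in W_n \cap A_{\LL}$. From this each $J_n$ is a neighbourhood of $0$ (it contains $W_{n+1} \cap A_{\LL}$), and every neighbourhood of $0$ in $A_{\LL}$, containing some $W_n \cap A_{\LL}$, a fortiori contains $J_n$. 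Thus $\{J_n\}_{n \ge 1}$ is a basis of neighbourhoods of $0$ consisting of ideals of $A_{\LL}$, which is precisely the linearity of the topology of $A_{\LL}$.
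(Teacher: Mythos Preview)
Your proof is correct and follows essentially the same sandwich strategy as the paper: exhibit ideals trapped between consecutive members of the standard neighbourhood basis. The only substantive difference is your choice of ideals --- you use $J_n = \delta_n M_{\LL}$, whereas the paper uses the principal ideals $V_n = \delta_n A_{\LL}$ --- and this shifts where the ``infinitesimality'' input is needed (you use it for the inclusion $W_{n+1}\cap A_{\LL}\subseteq J_n$, the paper for $V_{n+1}\subseteq U_n$). One point worth noting: in the topologically nilpotent case you explicitly prove that $\epsilon$ is infinitesimal with respect to $\LL$, which is exactly what the argument requires; the paper's proof in that case cites only Remark~2.2 (i.e.\ $\epsilon\notin\LL$), so your treatment is in fact the more careful of the two here.
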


\begin{proof} Case 1: there is a sequence of infinitesimal elements  $(\e_n, n\in \NN)$ and no nilpotent element.  Set $U_n = \{x \in A_{\LL}, |x| < \e_n\}, V_n= \epsilon_nA_{\LL}$.  It is enough to show  that $U_{n+1} \subset V_{n+1} \subset U_n, \forall n$.

Step 1 ($U_{n+1} \subset V_{n+1}$). Assume that $x \in U_{n+1}$; this means that $a = \frac{x}{\epsilon_{n+1}} \in A_{\LL}$, so that $x \in V_{n+1}$.

Step 2 ($V_{n+1} \subset U_n$). Let $x$ be in $V_{n+1}$, i.e $x = a\epsilon_{n+1}$, for some $a \in A_{\LL}$ and set $\omega_1 = \frac{1}{\epsilon_1}$. Since $|a| < \omega_1, \forall a \in A_{\LL}$ (see Remark 2.1 (a)), it follows that $|x| < \epsilon_{n+1}\omega_1 < \epsilon_n$ by our choice of the infinitesimals, which proves the claim.   

Case 2: there is a topologically nilpotent element $\e$. Then the proof above works if we replace $\e_n$ by $ \e^n, \forall n \in \NN$ (and use remark 2.2).
\end{proof}

\begin{proposition} \label{complete} For every subfield $\LL$ such that $\KK$ is non-comparable with it the following hold true:

1. $A_{\LL}$ is closed in $\KK$ and Cauchy complete 

2.  $M_{\LL}$ is  closed in $A_{\LL}$.

\end{proposition}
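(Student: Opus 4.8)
The plan is to prove the two closedness/completeness statements together, exploiting the linear topology established in Lemma \ref{linear topology} and the Cauchy completeness of $\KK$ itself. First I would fix the description of a neighbourhood basis of $0$ in $A_{\LL}$: by Lemma \ref{linear topology} we may use the ideals $V_n = \e_n A_{\LL}$ (or $\e^n A_{\LL}$ in the topologically nilpotent case), and the key quantitative fact from the proof of that lemma is the sandwich $U_{n+1}\subset V_{n+1}\subset U_n$, where $U_n=\{x\in A_{\LL}: |x|<\e_n\}$. So convergence of a sequence in $A_{\LL}$ to a point of $A_{\LL}$ is the same whether measured by the $U_n$'s or the $V_n$'s, and it coincides with convergence in the order topology of $\KK$ restricted to $A_{\LL}$.

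For part 1, to see that $A_{\LL}$ is closed in $\KK$, I would take $x\in\KK$ in the closure of $A_{\LL}$ and argue it cannot be infinitely large with respect to $\LL$: if it were, then $|x|>a$ for every $a\in\LL^+$, hence also $|x|>\omega_1=1/\e_1$ (which lies in $A_{\LL}$, indeed every element of $A_{\LL}$ has absolute value $<\omega_1$ by Remark 2.1(a)); but then the neighbourhood $\{y:|y-x|<\e_1\}$ of $x$ would contain no element of $A_{\LL}$, contradicting $x$ being in the closure. Hence $x\in A_{\LL}$. For Cauchy completeness, I would take a Cauchy sequence $(x_k)$ in $A_{\LL}$; it is then Cauchy in $\KK$, which is complete, so it converges to some $x\in\KK$; since $A_{\LL}$ is closed, $x\in A_{\LL}$, and convergence in $A_{\LL}$ follows because the two topologies agree. (One should also check that a sequence Cauchy for the linear topology of $A_{\LL}$ is Cauchy for the order topology of $\KK$ — this is immediate from $V_n\subset U_{n-1}$.)

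For part 2, that $M_{\LL}$ is closed in $A_{\LL}$: here I would note that $M_{\LL}$ is precisely the intersection $\bigcap_n V_n$ — or at least that $V_1=\e_1 A_{\LL}\subseteq M_{\LL}\subseteq V_0$ type inclusions pin it down. More directly: $M_{\LL}=\{x\in A_{\LL}: |x|<a\ \forall a\in\LL^+\}$; take $x\in A_{\LL}$ in the closure of $M_{\LL}$ and suppose $x\notin M_{\LL}$, i.e. there is $a\in\LL^+$ with $|x|\ge a$. Choose $n$ with $\e_n<a$ (possible since $\e_n\to 0$ and $a\in\LL^+$ is not infinitesimal, so $\e_n<a$ eventually); then the neighbourhood $\{y:|y-x|<\e_n\}$ meets $M_{\LL}$ in some $y$, but $|y|\ge|x|-\e_n\ge a-\e_n$, and I would refine the choice of $n$ (taking $\e_n < a/2$, say, using $\frac12\in\LL^+$) so that $|y|\ge a/2\in\LL^+$ forces $y\notin M_{\LL}$, a contradiction. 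Alternatively and more cleanly, $M_{\LL}$ is a maximal, hence prime, ideal of the valuation ring $A_{\LL}$; since $A_{\LL}/M_{\LL}$ is a field and the topology is linear with the $V_n$ as basic ideals, one shows $M_{\LL}=\bigcap_n V_n$ and an intersection of the closed sets $V_n$ (each $V_n$ is open, but also closed being a subgroup/ideal in a topological group with the $V_n$ as basis) is closed.

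I expect the main obstacle to be part 2, specifically identifying $M_{\LL}$ in terms of the basic ideals $V_n$: the inclusion $\bigcap_n V_n\subseteq M_{\LL}$ is clear (anything divisible by all $\e_n$ is infinitesimal over $\LL$), but the reverse inclusion $M_{\LL}\subseteq\bigcap_n V_n$ — that every element infinitesimal over $\LL$ is actually a multiple of every $\e_n$ in $A_{\LL}$ — needs the valuation-ring structure: if $x\in M_{\LL}$ then $x/\e_n$ has absolute value $<\e_{n-1}/\e_n\cdot$(something)$<\omega_1$? No — rather $|x/\e_n|$: since $x$ is infinitesimal over $\LL$ and $\e_n\in M_{\LL}$, one needs $|x|<|\e_n|\cdot$(non-infinitely-large), which requires comparing two infinitesimals and is exactly where the non-comparability/maximality hypotheses and Remark 2.1 must be used carefully. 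Once $M_{\LL}=\bigcap_n V_n$ is secured, closedness is formal.
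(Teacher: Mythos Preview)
Your overall strategy is sound and close in spirit to the paper's, but there are two concrete problems.

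For Part~1, your justification of closedness is flawed: you assert that $x$ infinitely large over $\LL$ implies $|x|>\omega_1$, but $\omega_1=1/\e_1$ is itself infinitely large over $\LL$ (so $\omega_1\notin A_{\LL}$, contrary to your parenthetical), and nothing prevents, e.g., $x=\sqrt{\omega_1}$, which is infinitely large yet $<\omega_1$. The conclusion you want is nonetheless true, for the simpler reason that if $y\in A_{\LL}$ and $|x-y|<\e_1$, then $x-y$ is infinitesimal over $\LL$ (being bounded by the infinitesimal $\e_1$), whence $x=y+(x-y)\in A_{\LL}+M_{\LL}=A_{\LL}$. This is exactly how the paper argues: it proves completeness directly (the limit $c$ of a Cauchy sequence in $A_{\LL}$ satisfies $c-c_N\in M_{\LL}$ for some $N$, hence $c\in A_{\LL}$), and closedness follows formally.

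For Part~2, your first (direct) argument is correct and is essentially the paper's: if $d$ lies in the closure of $M_{\LL}$, pick any infinitesimal $h$ and an $x\in M_{\LL}$ with $|x-d|<h$; then $x-d\in M_{\LL}$, so $d\in M_{\LL}$. Your ``cleaner'' alternative via $M_{\LL}=\bigcap_n V_n$, however, fails, and the obstacle you flagged is fatal rather than merely technical: the inclusion $M_{\LL}\subseteq\bigcap_n V_n$ is false in general. Since $\KK$ is maximal ordered, $\sqrt{\e_1}\in\KK$, and $\sqrt{\e_1}\in M_{\LL}$; but $\sqrt{\e_1}/\e_1=1/\sqrt{\e_1}$ is infinitely large over $\LL$, so $\sqrt{\e_1}\notin \e_1 A_{\LL}=V_1$. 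In fact $\bigcap_n V_n=\{0\}$ (since $V_{n+1}\subset U_n$ and the $U_n$ separate points), so $M_{\LL}\supsetneq\bigcap_n V_n$ and this route cannot be salvaged.

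The paper's proof is notably shorter than your plan: it never invokes Lemma~\ref{linear topology} or the ideals $V_n$, relying only on the single observation that an element within infinitesimal distance (over $\LL$) of $A_{\LL}$, respectively $M_{\LL}$, already lies in $A_{\LL}$, respectively $M_{\LL}$.
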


\begin{proof} 

1. It is enough to prove that $A_{\LL}$ is complete.

Let $(c_n)$ be any Cauchy sequence with $c_n \in A_{\LL}, \forall n$.  Hence there is $c \in \KK$ such that $c = \lim c_n$.  This implies that,   for every infinitesimal $h$, the open interval $]c-h, c+h[$ contains at least one element $c_N$, i.e. $c-c_N = \eta$, with $\eta$ infinitely small and so belonging to $M \subset A$. We obtain that $c-c_N = \eta \in A_{\LL}$, i.e. that $c = c_N+\eta \in A_{\LL}$.

2. Choose $d \in A_{\LL}$ such that every open neighbourhood of  infinitesimal radius $h$ of $d \in A_{\LL}$ contains some $x \in M_{\LL}, x \ne d$. This means that $ |x-d| < h$, i.e. $x-d \in M_{\LL}$, so that $d \in M_{\LL}$.
\end{proof}

We will consider the special case  $\LL= \QQ $, since we know that $\QQ$ is a  subfield which $\KK$ is non-comparable with.  The ring $A = \{x \in \KK,  x$ is not infinitely large over $\QQ\}$ is a valuation ring, as we have already seen, and $M = \{x \in A, x$ is infinitely small over $\QQ\}$ is its maximal ideal.

 \section{Changing the interval and trasforming $S(X)$ into a restricted power series over $A$}

We want to show that every power series converging in some closed interval, can be transformed into a restricted power series over the ring of non-infinitely large elements,   the simple tools being a linear change of variable and the multiplication by a suitable element in $\KK$. The series so acquires a few good properties useful in what follows.

\begin{proposition} \label{restricted} Let $S(X)$ be a power series defined over a set $D_S \subset \KK$ containing the closed interval $[a,b]$. Then there is a linear change of variable $X = hZ+k$ such that 

(i) $[a,b]$ is mapped one-to one onto $[1,2]$, and $X = a$ corresponds to $Z = 1$, $X = b$ corresponds to $Z = 2$,

(ii) $S(hZ+k) = T(Z) $ is a power series whose domain contains $[1,2]$.

Moreover there are  $d \in \KK$ and $N \in \NN$ such that 

(iii) $dT(Z)$ is a restricted power series over the ring $A$ of non-infinitely large elements and  $da_N = 1, da_{N+h} \in M, \forall h \ge 1$.

\end{proposition}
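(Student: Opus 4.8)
The plan is to handle the three requirements in sequence, as each one can be met by an explicit elementary operation, and the only real subtlety is bookkeeping with the order of the field.

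First I would construct the linear change of variable for (i) and (ii). The affine map sending $[a,b]$ onto $[1,2]$ with $a \mapsto 1$, $b \mapsto 2$ is forced: writing $X = hZ + k$ we need $ha + k = 1$ and $hb + k = 2$, hence $h = \tfrac{1}{b-a}$ and $k = 1 - \tfrac{a}{b-a} = \tfrac{b-2a}{b-a}$. Since $a < b$ the element $b-a$ is a positive, hence invertible, element of $\KK$, so $h,k \in \KK$ are well defined and $h \neq 0$; the map is an order-preserving bijection of $\KK$ onto itself, so it carries $[a,b]$ onto $[1,2]$ and carries the domain $D_S \supseteq [a,b]$ onto a set containing $[1,2]$. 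Substituting, $T(Z) := S(hZ+k)$ is again a power series (a formal substitution of a degree-one polynomial into a power series produces a power series, and convergence is preserved on the image interval because the substitution is continuous and invertible on $\KK$). This gives (i) and (ii) with no difficulty.

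Next I would produce $d$ and $N$ for (iii). Write $T(Z) = \sum_n a_n Z^n$ (abusing notation for the new coefficients). Because $T$ converges at $Z = 2 \in [1,2]$, the terms $a_n 2^n$ tend to $0$, and in particular the sequence $(a_n 2^n)$, hence also $(a_n)$, is bounded in $\KK$: there is a bound, and therefore — using that the $\e_n$ (or the powers $\e^n$, in the nilpotent case) are cofinal among positive infinitesimals while $\omega_1 = 1/\e_1$ is infinitely large — we can scale. The key point is to choose $N$ to be an index where $|a_N|$ is "maximal'' in the valuation sense: since $a_n \to 0$ in the order topology but not all $a_n$ are infinitesimal (otherwise $T$ would be identically infinitesimal on $[1,2]$, a degenerate case one can treat separately or absorb), among the finitely many $n$ with $a_n \notin M$ pick one, call it $N$, with $|a_n| \le |a_N|$ for all such $n$; for the infinitely many remaining indices $a_n \in M$, and one checks $|a_n| \le |a_N|$ there too since $|a_N|$ is not infinitesimal. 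Then set $d = 1/a_N \in \KK$. Now $d a_N = 1$, and for every $n$ we have $|d a_n| = |a_n|/|a_N| \le 1$, so $d a_n \in A$; this shows $dT(Z) = \sum (d a_n) Z^n$ has all coefficients in $A$. Restrictedness, $d a_n \to 0$ in $A$, follows from $a_n \to 0$ in $\KK$ together with Lemma~\ref{linear topology} (the topology of $A$ is linear and is the one induced from $\KK$), so multiplication by the fixed element $d$ is continuous on $A$. Finally, for $h \ge 1$ one needs $d a_{N+h} \in M$: this requires choosing $N$ as the \emph{largest} index with $a_N \notin M$, which is legitimate precisely because only finitely many $a_n$ lie outside $M$ — indeed $a_n \to 0$ and $M$ is a neighbourhood of $0$ in $A$ (it contains the $U_n$'s), so cofinitely many $a_n$ are in $M$. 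With $N$ chosen maximal, $a_{N+h} \in M$ for all $h \ge 1$, and then $d a_{N+h} \in dM \subseteq M$ since $d \in A_\LL$ (here $\LL = \QQ$, $A = A_\QQ$) and $M$ is an ideal of $A$.

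The main obstacle, and the place to be careful, is the claim that only finitely many coefficients $a_n$ fail to be infinitesimal, and the attendant choice of $N$ as the largest such index. This rests on two facts already available: that $a_n \to 0$ in the order topology of $\KK$, which holds because $T$ converges at an interior point of its interval of convergence (the point $Z=2$), and that the set $M$ of infinitesimals, intersected with $A$, is open — equivalently that the basic neighbourhoods $U_n$ of Lemma~\ref{linear topology} consist of infinitesimals, which is immediate from $U_n \subseteq \e_n A_\LL \subseteq M$ for $n \ge 1$. Once these are in hand the argument is routine: pick $N$ maximal with $a_N \notin M$ (if every $a_n \in M$, the statement degenerates and one may take, say, $N=0$ after first multiplying $T$ by a large element to clear denominators, or simply note $S$ has no sign change issue to resolve — this boundary case should be flagged but carries no content). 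The uniqueness of $h,k$ and the forced form of the scaling make the rest a direct verification.
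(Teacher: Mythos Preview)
Your argument for (iii) has a genuine gap: you make two incompatible choices of $N$. First you take $N$ so that $|a_N|$ is maximal among the non-infinitesimal coefficients, which (together with the observation that the infinitesimal ones are automatically smaller) gives $|da_n|\le 1$ and hence $da_n\in A$ for all $n$. Later you switch to taking $N$ as the \emph{largest index} with $a_N\notin M$, which is what guarantees $da_{N+h}\in M$ for $h\ge 1$. These two choices need not coincide, and neither one alone suffices. For instance, with $a_0=\omega^2$, $a_1=\omega$, $a_2=1$ and $a_n\in M$ for $n\ge 3$ (where $\omega=1/\e_1$ is infinitely large), the largest index with $a_N\notin M$ is $N=2$, giving $d=1$ and $da_0=\omega^2\notin A$; conversely, with $a_0=\omega$, $a_1=\omega/2$ and $a_n\in M$ for $n\ge 2$, the index of maximal $|a_N|$ is $N=0$, giving $da_1=1/2\notin M$. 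The paper avoids this by normalising in two stages: first divide by $a:=\max_n|t_n|$ (which exists since $t_n\to 0$), so that every $b_n=t_n/a$ lies in $A$ and some $|b_h|=1$; then take $N$ to be the largest index with $b_N\notin M$ and divide by the \emph{unit} $b_N\in A\setminus M$, which keeps all coefficients in $A$ while forcing $c_n\in M$ for $n>N$. Your single scaling $d=1/a_N$ collapses these two steps and loses one of the two required properties.

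Two smaller points. Your computation of $h,k$ is inverted: from $X=hZ+k$ with $Z=1\mapsto X=a$ one gets $h+k=a$ and $2h+k=b$, hence $h=b-a$, $k=2a-b$, not $h=1/(b-a)$. And the claim that $T(Z)=S(hZ+k)$ is again a convergent power series on $[1,2]$ is not a one-liner over a non-Archimedean field: the paper separates the substitution into the dilation $X=(b-a)Y$ (harmless) and the translation $Y=Z+\bar k$, and for the latter explicitly verifies, via \cite{Corgnier}, Theorems~3.3 and~3.7, that the intermediate series converges at the translation point $\bar k$, so that the re-expanded series is legitimate on the new interval.
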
 

\begin{proof}
(i) and (ii).

We set:
$ X =(b-a)Y$, obtaining a series in the variable  $Y$, say $U(Y) = \sum a_n(b-a)^nY^n$. $U(Y)$ is convergent at least on $ [\frac{a}{b-a}, \frac{b}{b-a}]$.
Now we set: $ \bar k=\frac{2a-b}{b-a}$ and operate the translation $Y = Z+\bar k$. This translation is allowed if $U(\bar k)$ is a converging series (see \cite{Corgnier}, Theorem 3.7). This holds true since  $U(Y)$ is convergent both at  $\frac{a}{b-a}$ and at $\frac{b}{b-a}$, so also at $\frac{2a}{b-a}$ and at $\frac{2a}{b-a} - \frac{b}{b-a}$ (see \cite{Corgnier}, Theorem 3.3). Hence $T(Z) = S((b-a)Z+(2a-b)) = U(Y)$,  is convergent on $[1,2]$ (see \cite{Corgnier}, Theorem 3.7) and $S(a) = T(1), S(b) = T(2)$, so that (ii) is fulfilled with $h = b-a, k = 2a-b$.

(iii). 

Set: $T(Z) = \sum t_nZ^n$. We have $\lim_{n \to \infty} t_n = 0$ because the series is convergent at $Z = 1$ (see \cite{Corgnier}, General facts, Theorem 2.1, and \cite{Massaza}, p. 335, XII)  and so only finitely many coefficients lie outside of $A$, since $A$ contains all the infinitely small elements.

Let now $|t_h| = a$ be the largest among all the absolute values $|t_n|$. Then $b_n = \frac{t_n}{a}$ is not infinitely large and so $H(X) = \frac{S(X)}{a} = \sum b_nX^n$ is a power series over $A$ such that $\lim b_n = 0$ (it is convergent at $1$). Therefore $H(X) \in A\{X\} = $ ring of restricted power series over $A$. Observe that $|b_h| = 1$ implies $H(X) \notin M\{X\}$ (restricted series with all coefficients in $M$). In this event  there is the largest integer, say $N$, such that $b_N \notin M$. As a consequence $b_N$ is invertible in $A$ and we can consider the following series: $V(X) = \frac{H(X)}{b_N} = \sum c_nX^n$, which is still restricted over $A$ and has $c_N = 1, c_n \in M, \forall n > N$.  Therefore $d = \frac{1}{ab_N}$.

\end{proof}

\begin{remark} \label{remark restricted} 

(i) If $[a,b]$ is any interval, we can transform $S(X)$ into another series $T(Z)$ converging in $[1,2]$ and then we can replace $T(Z)$ by $dT(Z)$. Observe that $T(1)T(2) < 0$ if and only if $S(a)S(b) < 0$.

(ii) It is worth to point out that there is a one-to-one correspondence between the zeros of $S(X)$ in $[a,b]$ and the zeros of $T(Z)$ in $[1,2]$.

 (iii) Obviously the above proof works if {\it{$[1,2]$ is replaced by any interval whose endpoints are neither infinitely large nor infinitely small.}}

\end{remark}

\section{Hensel's lemma for restricted power series}

In the present section $\KK$ is {\it{maximal ordered and complete, with the exception of the following Proposition \ref{Henselpair}, which holds true for a maximal ordered field $\KK$}}. We choose a subfield $\LL \subset \KK$ such that $\KK$ is not comparable with it, for instance $\LL = \QQ$.  We want to show that Hensel's Lemma for restricted power series (\cite{Greco}, p.19) holds on the local ring $A$ of  elements which are non-infinitely large with respect to $\LL$, even if there is no topologically nilpotent element in the maximal ideal $M$. By \cite{Valabrega}, Teorema 5, it is enough to show that $(A,M)$ is a henselian pair.

\begin{proposition} \label{Henselpair} Let $(A,M)$ be a valuation ring of a maximal ordered, not nessarily complete, field $\KK$. Then $(A,M)$ is a henselian pair.
\end{proposition}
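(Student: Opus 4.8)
The plan is to verify the criterion recalled just above: a local ring is henselian exactly when every N-polynomial has a root in its maximal ideal, and for the local ring $A$ with maximal ideal $M$ this is the same as saying that $(A,M)$ is a henselian pair. So I would start from an arbitrary N-polynomial $P(X) = X^r + c_{r-1}X^{r-1} + \cdots + c_1 X + c_0 \in A[X]$, with $c_0 \in M$ and $c_1 \notin M$, and produce a root of $P$ lying in $M$. The only external input is the intermediate value theorem for polynomials over a maximal ordered field (\cite{Bourbaki A}, \textsection 2, n. 5, Proposition 5); note that completeness of $\KK$ is never used, in accordance with the hypotheses.

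Since $c_1 \in A \setminus M$ and $A$ is a valuation ring with maximal ideal $M$, the element $c_1$ is a unit of $A$; put $x_0 = -c_0/c_1 \in M$. If $c_0 = 0$ then $0$ is the desired root, and if $r = 1$ then $-c_0 \in M$ works, so I may assume $r \ge 2$ and $x_0 \ne 0$. Writing $X = x_0 + t$ and using $c_0 + c_1 x_0 = 0$ one gets $P(x_0 + t) = c_1 t + R(t)$, where $R(t) = \sum_{k=2}^{r} c_k (x_0 + t)^k$ (with $c_r = 1$) gathers the higher-order terms. The idea is that, for $t$ of the same infinitesimal size as $x_0$, the linear term $c_1 t$ controls $R(t)$, so that $P$ changes sign across $x_0$, and then the intermediate value theorem supplies a zero.

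To make this precise I would set $\delta = 2|x_0| \in M$, $\delta > 0$, and $C = \sum_{k=2}^{r} |c_k| \in A$, noting $C \ge |c_r| = 1$. For $t = \pm\delta$ one has $|x_0 + t| \le 3|x_0| < 1$, hence $|R(\pm\delta)| \le 9|x_0|^2 C$. Because $c_1 \notin M$ while $C$, being a finite sum of elements of $A$, is not infinitely large, the element $2|c_1|/(9C)$ is not infinitesimal, whereas $|x_0|$ is; therefore $9C|x_0| < 2|c_1|$, and multiplying by $|x_0| > 0$ gives $|R(\pm\delta)| \le 9C|x_0|^2 < 2|c_1||x_0| = |c_1|\,\delta$. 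It follows that $P(x_0 + \delta) = c_1\delta + R(\delta)$ and $P(x_0 - \delta) = -c_1\delta + R(-\delta)$ have opposite signs, so the intermediate value theorem yields $\xi$ strictly between $x_0 - \delta$ and $x_0 + \delta$ with $P(\xi) = 0$; as $|\xi| < 3|x_0|$ is infinitesimal, $\xi \in M$. Hence every N-polynomial has a root in $M$, and $(A,M)$ is a henselian pair.

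I expect the only real care to be needed in the size estimate: one must check that $c_1$ being a unit of $A$, together with $C$ not being infinitely large, forces $|c_1|/C$ to be non-infinitesimal, and that the auxiliary points $x_0 \pm \delta$ as well as the produced root $\xi$ all remain in $M$ — which is immediate since each of them has absolute value at most $3|x_0|$, an infinitesimal.
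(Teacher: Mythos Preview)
Your proof is correct, but it proceeds by a genuinely different route from the paper. Both arguments reduce to the N-polynomial criterion and both exploit that $\KK$ is maximal ordered, yet they invoke different consequences of that hypothesis. You use the intermediate value theorem for polynomials directly: you locate the ``Newton point'' $x_0=-c_0/c_1\in M$, bound the higher-order remainder $R(t)$ by an explicit infinitesimal estimate so that $P$ changes sign on $[x_0-\delta,x_0+\delta]\subset M$, and then apply the polynomial IVT. The paper instead argues algebraically: over a maximal ordered field every monic polynomial factors into linear and irreducible quadratic factors, all of which lie in $A[X]$ because the valuation ring $A$ is integrally closed; a short computation shows an irreducible quadratic cannot be an N-polynomial, and a factorization lemma (exactly one factor of an N-polynomial is again an N-polynomial) then isolates a linear N-factor $X+q_0$ with $q_0\in M$. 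Your approach is more analytic and self-contained (no appeal to integral closure or to the linear/quadratic factorization theorem), while the paper's approach is more structural and avoids any size estimates; either way the conclusion is the same, and completeness of $\KK$ is not used.
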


\begin{proof}

 Since $A$ is a local ring, it is enough to prove that every $N$-polynomial $P(X) = X^r+c_{r-1}X^{r-1}+\cdot \cdot \cdot+c_1X+c_0 \in A[X]$ (i.e. with $c_0 \in M, c_1 \notin M$) has a root in $M$ (see Notation and  \cite {Valabrega}, \textsection 1). 
 
 First we observe that, if the polynomial has degree $r = 1$, then $P(X) = X+c_0$ has the root $-c_0 \in M$.

Then we consider the case of a degree two polynomial. If $P(X) = X^2+2bX+c$ is any $N$-polynomial, then it has two roots in $\KK(i)$ (algebraic closure of $\KK$, see \cite{Bourbaki A}, \textsection 2, n. 6, Th\' eor\`eme 3): $a = -b+\sqrt{b^2-c}, a' = -b-\sqrt{b^2-c}$. Since $b \notin M, c \in M, b^2-c$ is for sure positive, so that the two roots actually belong to $\KK$ (which is maximal ordered), hence also to $A$, which is integrally closed as a valuation ring (\cite{BourbakiEV}, chap. 6, \textsection 1, n. 3, Corollaire 1).  Therefore there is no irreducible degree two $N$-polynomial. 

Now we point out that, if an N-polynomial $p(X) = X^m+\cdot \cdot \cdot +p_1X+p_0$ is the product of two factors $q(X) = X^s+\cdot \cdot \cdot +q_1X+q_0 , r(X) = X^h+\cdot \cdot \cdot +r_1X+r_0$, then one and only one between the two factors is an N-polynomial. In fact we have: $q_0r_0 = p_0 \in M$, so that one factor must belong to $M$, say $q_0$. In this event $q_0r_1+q_1r_0 = p_1 \notin M$, so that neither $q_1$ nor $r_0$ can belong to $M$. Therefore $q(X)$ is an N-polynomial while $r(X)$ is not. This implies in particular that a degree two $N$-polynomial has exactly one root in $M$.

Let us now assume that $r \ge 3$. Since $\KK$ is maximal ordered, $P(X)$  is the product of linear factors, say $P_1(X),\cdot \cdot \cdot, P_h(X)$, and irreducible second degree factors, say $Q_1(X),\cdot \cdot \cdot, Q_s(X)$ (see (\cite{Bourbaki A}, \textsection 2, n. 6, Proposition 9). Since $A$ is integrally closed, all factors have coefficients in $A$ (see \cite{Bourbaki AC}, chap. 5, \textsection 1, n. 3, Proposition 11). 

Therefore we obtain that one and only one among the linear factors is an N-polynomial, since the second degree irreducible factors cannot be N-polynomials. Such a factor has the required root.

\end{proof}

\begin{corollary} \label{Henselrestricted} $A$ satisfies Hensel's Lemma for restricted power series.
\end{corollary}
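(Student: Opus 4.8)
The plan is to derive Corollary~\ref{Henselrestricted} by combining Proposition~\ref{Henselpair} with the cited Teorema~5 of \cite{Valabrega}. First I would recall exactly what that theorem asserts: for a ring $A$ which is complete and separated with respect to a linear topology, with a closed ideal $M$, the validity of Hensel's Lemma for restricted power series over $A$ (in the form of Lemma~\ref{Hensel}) does \emph{not} require the elements of $M$ to be topologically nilpotent, provided that $(A,M)$ is a henselian pair. So the corollary follows once we have verified the three hypotheses: (a) $A$ is complete and separated in its topology, (b) $M$ is closed, and (c) $(A,M)$ is a henselian pair.

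Next I would assemble these ingredients, all of which are already in place in the excerpt. Hypothesis (c) is exactly Proposition~\ref{Henselpair}, applied to $\LL = \QQ$ (or any non-comparable subfield), since the present section assumes $\KK$ is maximal ordered and complete, in particular maximal ordered. Hypothesis (a) splits into two parts: the topology on $A = A_\QQ$ is linear by Lemma~\ref{linear topology}, and $A$ is Cauchy complete by part~1 of Proposition~\ref{complete}; separatedness (Hausdorff) is noted in Remark~\ref{topology}. Hypothesis (b), that $M = M_\QQ$ is closed in $A$, is part~2 of Proposition~\ref{complete}. With (a), (b), (c) checked, Teorema~5 of \cite{Valabrega} applies verbatim and yields that $A$ satisfies Hensel's Lemma for restricted power series in the sense of Lemma~\ref{Hensel}.

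I do not expect a genuine obstacle here, since the corollary is essentially a bookkeeping step that records that every hypothesis of the external black-box theorem has been established piecemeal in Sections~2 and~3. The only point requiring a little care is to make sure the two notions of "Hensel's Lemma" line up: Proposition~\ref{Henselpair} is phrased in terms of N-polynomials having roots in $M$ (equivalently, $(A,M)$ being a henselian pair), whereas the conclusion we want is the restricted-power-series factorization statement of Lemma~\ref{Hensel}. The bridge between these two formulations is precisely the content of \cite{Valabrega}, Teorema~5, so the proof should cite it explicitly and state that the transition from "henselian pair" to "Hensel for restricted series" is legitimate even in the absence of topologically nilpotent elements in $M$ — which is the whole reason that stronger version was needed rather than the classical Lemma~\ref{Hensel} with its nilpotency hypothesis. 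In short, the proof is: check (a), (b) via Lemma~\ref{linear topology}, Remark~\ref{topology}, Proposition~\ref{complete}; invoke Proposition~\ref{Henselpair} for (c); conclude by \cite{Valabrega}, Teorema~5.
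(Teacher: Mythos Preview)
Your proposal is correct and follows essentially the same route as the paper: verify that $A$ is complete and Hausdorff with a linear topology (Lemma~\ref{linear topology}, Remark~\ref{topology}, Proposition~\ref{complete}), that $M$ is closed (Proposition~\ref{complete}), and that $(A,M)$ is a henselian pair (Proposition~\ref{Henselpair}), then invoke \cite{Valabrega}, Teorema~5. The paper's proof is a single sentence doing exactly this.
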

\begin{proof} It is \cite{Valabrega}, Teorema 5, since $A$ is complete and Hausdorf with respect to a linear topology, $M$ is closed and $(A,M)$ is a henselian pair.

\end{proof}

\begin{corollary} \label{Salmon}Let $S(X) = \sum_{n = 0}^{\infty} a_nX^n$ be a restricted power series over $A$ such that the partial sum $S_N(X)$ is a monic polynomial for some $N$ and moreover $a_{N+h} \in M, \forall h \ge 1$. Then $S(X) = P(X)B(X)$, where $P(X)$ is a monic polynomial such that $P(X) = S_N(X)$ mod $M$ and $B(X) \in 1+M\{X\}$ is a restricted power series.
\end{corollary}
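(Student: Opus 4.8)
The plan is to apply verbatim the Hensel's Lemma for restricted power series of Corollary~\ref{Henselrestricted}; the only preliminary is to identify the factorization of $S(X)$ modulo $M$. First I would observe that, since $S_N(X)$ is monic, $a_N=1$, while $a_{N+h}\in M$ for every $h\ge 1$ by hypothesis; hence, reducing coefficient by coefficient, the canonical image of $S(X)$ in the topological quotient ring $\frac{A}{M}\{X\}$ is
\[
\bar S(X)=X^{N}+\bar a_{N-1}X^{N-1}+\cdots+\bar a_1X+\bar a_0=\bar S_N(X),
\]
a monic polynomial of degree $N$, because all the tail coefficients $\bar a_{N+h}$ vanish. (Here one uses that $M$ is closed in $A$, Proposition~\ref{complete}, so that the reduction map on restricted series behaves as expected and its kernel is exactly $M\{X\}$.)

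Next I would write $\bar S(X)=\bar S_N(X)\cdot\bar 1$, a product of the monic polynomial $\bar S_N(X)$ and the constant — hence restricted — series $\bar 1$; the two factors are trivially coprime, since $\bar 1$ generates the unit ideal of $\frac{A}{M}\{X\}$. Thus the hypotheses of Corollary~\ref{Henselrestricted} are satisfied (the henselian pair $(A,M)$ being the one of Proposition~\ref{Henselpair}), and it yields a factorization $S(X)=P(X)B(X)$ with $P(X)$ a monic polynomial lifting $\bar S_N(X)$ and $B(X)$ a restricted power series lifting $\bar 1$, the two being uniquely determined and coprime. Lifting $\bar S_N(X)$ means precisely that $P(X)=S_N(X)$ mod $M$, and lifting $\bar 1$ means $B(X)-1\in M\{X\}$, that is $B(X)\in 1+M\{X\}$; this is exactly the assertion.

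There is essentially no obstacle here: the whole weight of the statement rests on Corollary~\ref{Henselrestricted}. The only points deserving a line of care are the computation of $\bar S(X)$ above — where one must check that the kernel of the reduction $A\{X\}\to\frac{A}{M}\{X\}$ is indeed $M\{X\}$ — and the (tautological) verification that a monic polynomial is coprime to the unit $\bar 1$, so that Hensel's Lemma applies.
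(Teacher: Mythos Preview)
Your argument is correct and follows exactly the paper's approach: reduce modulo $M$ to obtain the trivial factorization $\bar S(X)=\bar S_N(X)\cdot\bar 1$ and then apply Hensel's Lemma for restricted power series (Corollary~\ref{Henselrestricted}). The paper's own proof is in fact terser than yours, simply citing this decomposition and remarking that Corollary~\ref{Henselrestricted} supplies the lemma without the topologically-nilpotent hypothesis.
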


\begin{proof} This is essentially \cite{Salmon}, Th\' eor\`eme 10. In fact the proof of this theorem makes only use of Hensel's Lemma for restricted power series, applied to the decomposition (mod $M$): $\bar S(X) = \bar S_N(X) \cdot \bar 1$. The previous corollary states that such a lemma holds without topologically nilpotent elements.

 \end{proof}

\begin{corollary}{\label{finitely many}} Let $S(X) = \sum_{n = 0}^{\infty} a_nX^n$ be a power series defined in the closed interval $[a,b]$. Then $S(X)$ has only finitely many zeros. 

\end{corollary}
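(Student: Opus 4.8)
The plan is to reduce to a restricted power series over $A$ and then read off the zeros from the polynomial factor furnished by Corollary~\ref{Salmon}. First I would invoke Proposition~\ref{restricted} together with Remark~\ref{remark restricted}, choosing in the latter a target interval $I$ with \emph{rational endpoints contained in the open interval} $]-1,1[$ (for instance $I=[\tfrac14,\tfrac34]$); this is legitimate by Remark~\ref{remark restricted}(iii). One thus obtains a linear change of variable $X=hZ+k$ carrying $[a,b]$ bijectively onto $I$ and an element $d\in\KK$ such that $V(Z)=d\,S(hZ+k)=\sum_{n\ge 0}c_nZ^n$ is a restricted power series over $A$, convergent on $I$, with $c_N=1$ and $c_{N+h}\in M$ for all $h\ge 1$. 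By Remark~\ref{remark restricted}(ii) (and since multiplication by $d\ne 0$ does not change zeros) the zeros of $S$ in $[a,b]$ correspond bijectively to the zeros of $V$ in $I$, so it suffices to bound the number of the latter.

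Next, since the partial sum $V_N$ is monic of degree $N$ and $c_{N+h}\in M$ for $h\ge 1$, Corollary~\ref{Salmon} yields a factorization $V(Z)=P(Z)B(Z)$ with $P$ a monic polynomial of degree $N$ over $A$ (satisfying $P\equiv V_N\pmod{M}$) and $B(Z)=1+\sum_{n\ge 1}b_nZ^n$ a restricted power series all of whose non-constant coefficients lie in $M$. I claim $B$ is nowhere zero on $I$. Indeed, for $z\in I$ one has $|z|<1$, hence $b_nz^n\to 0$, so the partial sums of $B(z)$ form a Cauchy sequence, which converges in $A$ because $A$ is complete (Proposition~\ref{complete}(1)); since each of these partial sums lies in $1+M$ and $M$ is closed in $A$ (Proposition~\ref{complete}(2)), we get $B(z)\in 1+M$, a unit of the local ring $A$, and in particular $B(z)\ne 0$. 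Evaluating the identity $V=PB$ at $z\in I$ — which is licit because all three series converge at $z$ (that of $V$ by construction, that of $P$ trivially, that of $B$ by the previous sentence) — gives $V(z)=P(z)B(z)$, so $V(z)=0$ if and only if $P(z)=0$. As $P$ is a polynomial of degree $N$ it has at most $N$ zeros in $I$; transporting back through $X=hZ+k$ shows that $S$ has at most $N$ zeros in $[a,b]$, hence only finitely many.

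The only delicate point — and the one I expect to be the main obstacle — is passing from the \emph{formal} factorization of Corollary~\ref{Salmon} to a genuine pointwise identity of functions, together with the non-vanishing of $B$. This is exactly why in the first step the target interval is chosen inside the closed unit ball $\{\,|z|\le 1\,\}$: it makes the partial sums of $B$ automatically Cauchy, so that the convergence of $B$ on $I$, and therefore the evaluation $V(z)=P(z)B(z)$, follow immediately from the completeness of $A$ and the closedness of $M$ established in Proposition~\ref{complete}. If one insisted on keeping the normalization $I=[1,2]$, the convergence of $\sum b_nz^n$ for $z$ near $2$ would need a separate estimate, whereas choosing $I\subset{}]-1,1[$ sidesteps this entirely.
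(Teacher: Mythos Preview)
Your proof is correct and follows the same strategy as the paper: reduce via Proposition~\ref{restricted} to a restricted series of the required shape, factor via Corollary~\ref{Salmon} as $V=PB$ with $B\in 1+M\{X\}$, and conclude because $B$ has no zeros on the interval. The paper works on $[1,2]$ and simply asserts that $B(x)=1+m$ with $m\in M$ infinitesimal, whereas your choice of a target interval inside $]-1,1[$ makes the convergence of $B$ and the inclusion $B(z)\in 1+M$ immediate; this is a small streamlining, not a different route.
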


\begin{proof}
Thanks to Proposition \ref{restricted}, we can assume that the partial sum $S_N(X)$ is a monic polynomial for some $N$, while $a_{N+h} \in M, \forall h \ge 1$.  By Corollary \ref{Salmon} we have $S(X) = P(X)B(X)$, where $P(X)$ is a polynomial and $B(X) \in 1+M\{X\}$ cannot have roots; therefore $S(X)$ vanishes where a polynomial vanishes.
\end{proof}

\begin{remark} In all the above results any field $\LL$ such that $\KK$ is non-comparable with it works. We obtain Corollary \ref {Salmon} with $A = $ ring  of elements that are not infinitely large with respect to $\LL$.
\end{remark}

\section{The intermediate value theorem}
\begin{theorem} \label{intermediate}Let $S(X)$ be a power series over $\KK$ converging at least in $[a,b]$ and such that $S(a)S(b) < 0$. Then there is $c \in ]a,b[$ such that $S(c) = 0$.

\end{theorem}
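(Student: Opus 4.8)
The plan is to peel off an invertible infinitesimal ``tail'' of $S(X)$ by means of the factorization provided by Corollary~\ref{Salmon}, thereby reducing the assertion to the classical intermediate value theorem for polynomials over a maximal ordered field (the result of Bourbaki recalled in the Introduction).

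First I would normalize $S$. By Proposition~\ref{restricted} and Remark~\ref{remark restricted}, there are a linear change of variable $X=hZ+k$ with $h=b-a>0$ (so that $Z=1$ corresponds to $X=a$ and $Z=2$ to $X=b$) and a nonzero $d\in\KK$ such that $V(Z):=d\,S(hZ+k)$ is a restricted power series over the valuation ring $A$ of non-infinitely-large elements, convergent on $[1,2]$, with $V_N$ monic of degree $N$ for some $N$ and with all coefficients of $V$ beyond degree $N$ lying in $M$. Moreover $V(1)V(2)<0$ exactly when $S(a)S(b)<0$, and $Z\mapsto hZ+k$ carries the zeros of $V$ in $[1,2]$ bijectively onto the zeros of $S$ in $[a,b]$. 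It therefore suffices to produce $c\in\,]1,2[$ with $V(c)=0$.

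Next, apply Corollary~\ref{Salmon} to $V$, obtaining $V(Z)=P(Z)B(Z)$ with $P\in A[Z]$ a monic polynomial and $B\in 1+M\{Z\}$ a restricted power series. The crucial point is that $B$ is strictly positive on $[1,2]$. Indeed, a restricted power series over $A$ converges at every point of $A\supseteq[1,2]$, because at a point $z\in A$ the tails of its partial sums eventually lie in the ideal $\epsilon_m A$ of Lemma~\ref{linear topology} and are hence smaller in absolute value than $\epsilon_{m-1}$, using Remark~2.1; and since $A$ is complete and $M$ is closed in it by Proposition~\ref{complete}, for each $z\in[1,2]$ the value $B(z)$ has the form $1+\mu$ with $\mu\in M$, so that $B(z)>0$ (this is the non-vanishing of $B$ already invoked in Corollary~\ref{finitely many}). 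Evaluating the identity $V=PB$ at $1$ and $2$ then yields $V(1)=P(1)B(1)$ and $V(2)=P(2)B(2)$ with $B(1),B(2)>0$, whence $P(1)P(2)<0$; in particular $\deg P\ge 1$.

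Finally, since $\KK$ is maximal ordered, the intermediate value theorem holds for the polynomial $P$ on the closed interval $[1,2]$ (Bourbaki, recalled in the Introduction), so there is $c\in\,]1,2[$ with $P(c)=0$; then $V(c)=0$, hence $S(hc+k)=0$ with $hc+k\in\,]a,b[$, which is the claim. The step I expect to require the most care is the control of the tail $B$: one must check that the factorization of Corollary~\ref{Salmon}, a priori an identity of formal restricted series over $A$, gives genuine numerical identities $V(z)=P(z)B(z)$ for $z\in[1,2]$, and that $B$ is both convergent and unit-valued there --- it is precisely this that turns the transcendental problem into the algebraic one. Everything else is the bookkeeping around the change of variable together with the cited polynomial case; the real work has already been done in Section~5, in making Hensel's Lemma for restricted power series available over $A$.
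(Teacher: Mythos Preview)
Your proposal is correct and follows essentially the same route as the paper: normalize via Proposition~\ref{restricted}, factor by Corollary~\ref{Salmon} as $P\cdot B$ with $B\in 1+M\{X\}$, observe $B(z)=1+\mu>0$ on $[1,2]$, and invoke the polynomial intermediate value theorem. You are somewhat more explicit than the paper about why the formal factorization yields a numerical identity at each $z\in[1,2]$ and why $B(z)-1$ actually lands in $M$ (via completeness of $A$ and closedness of $M$), but the overall architecture is identical.
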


\begin{proof}
Thanks to Proposition  \ref{restricted} we can assume that

- $a = 1, b = 2$,

- $S(X)$ is a power series over the local ring $(A,M)$, where $A = A_{\LL}$ is the ring of elements that are not infinitely large over any subfield $\LL \subset \KK$, with which $\KK$ is non-comparable, while $M = M_{\LL}$ is the maximal ideal of all the infinitely small elements, 

- $S(X)$ is restricted because it is convergent at $X = 1$, which implies $\lim a_n = 0$,

- $S(X)$ has a coefficient $a_N = 1$ with the property that $a_m \in M, \forall m > N$.

 Therefore, by Corollary \ref{Salmon}, $S(X) = P(X)B(X)$, where $P(X) = $ monic polynomial over $A$ such that $\bar P(X) = \bar S(X), B(X) = \sum b_nX^n =$ restricted power series over $A$ belonging to $1+M\{X\}$. If $x \in [a,b]$, then $B(x) > 0$, since $B(x) = 1+m$, for a suitable infinitesimal $m \in M$.

Let us now assume that $S(a)S(b) < 0$. Since $Q(x) > 0$ everywhere in the interval, we must have $P(a)P(b) < 0$ and so, by the intermediate value theorem for polynomials, there is $c \in ]a,b[$ such that $P(c) = 0$.   This implies $S(c) = 0$.
\end{proof}

\begin{remark} 
In the proof above any $\LL$, such that $\KK$ is non-comparable with it, can work, in particular $\LL = \QQ$.
When $\KK$ contains a topologically nilpotent element $\epsilon$ the proof of the intermediate value theorem can be based on Hensel's Lemma as it is stated in \cite{Greco}, p. 19 provided that we choose a maximal subfield $\LL$ which $\KK$ is non-comparable with.  It is in fact enough to observe that Hensel's Lemma for restricted power series holds true, since $A$ is equipped with a linear topology (see Lemma \ref{linear topology}) and it is Hausdorf and complete as well as the maximal ideal $M$ (Lemma \ref{complete}), while every element of $M$ is  topologically nilpotent (Lemma \ref{nilpotent2}).  Therefore the proof based on the decomposition mod $M$ works.\end{remark}

\section{The set of zeros of a power series: accumulation and cardinality}

The following theorem and corollary  hold both in the Archimedean and in the non-Archimedean case.

\begin{theorem} \label{accumulation}
Let  $ S(X) $ be a power series such that $S(a)S(b) < 0$. Then at least one among the zeros of $S(X)$ in $[a,b]$  is an accumulation point for the set $Z = \cup Z_n$, where $ Z_n = \{ \{z \in [a,b], S_n(z) = 0\}$. Therefore at least one zero is the limit of a sequence of zeros of partial sums.

\end{theorem}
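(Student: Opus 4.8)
The plan is to single out a zero $c$ of $S$ in $(a,b)$ at which $S$ genuinely changes sign, and then to build a sequence of zeros of the partial sums $S_n$ converging to $c$.

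\emph{Locating a sign-changing zero.} By Theorem \ref{intermediate} (or the classical intermediate value theorem in the Archimedean case) $S$ has at least one zero in $(a,b)$, and by Corollary \ref{finitely many} (classical in the Archimedean case) $S$ has only finitely many zeros $c_1<\dots<c_k$ in $[a,b]$, with $k\ge 1$. On each of the open intervals $(a,c_1),(c_1,c_2),\dots,(c_k,b)$ the series $S$ is non-vanishing, hence of constant sign: if it took values of opposite signs at two points $p<q$ of one of them, Theorem \ref{intermediate} applied to $[p,q]$ would produce a zero of $S$ strictly between them. Denote these signs by $\sigma_0,\sigma_1,\dots,\sigma_k$; then $\sigma_0=\mathrm{sign}\,S(a)$ and $\sigma_k=\mathrm{sign}\,S(b)$ are opposite, so $\sigma_{i-1}\ne\sigma_i$ for some $i$, and I set $c=c_i$. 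For this particular choice $S$ has sign $\sigma_{i-1}$ immediately to the left of $c$ and the opposite sign $\sigma_i$ immediately to its right.

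\emph{Forcing the partial sums to vanish near $c$.} Pick a positive $\delta$ (as small as we wish, e.g. $\delta=\e_m$ in case 2, $\delta=\e^m$ in case 1, or $\delta=1/m$ in the Archimedean case) so small that $[c-\delta,c+\delta]\subset(a,b)$ and contains no zero of $S$ other than $c$; then $S(c-\delta)$ and $S(c+\delta)$ are non-zero, with signs $\sigma_{i-1}$ and $\sigma_i$, hence of opposite sign. Since $S_n(c-\delta)\to S(c-\delta)$ and $S_n(c+\delta)\to S(c+\delta)$ with non-zero limits, for all $n$ large enough $S_n(c-\delta)$ and $S_n(c+\delta)$ keep the signs of $S(c-\delta)$ and $S(c+\delta)$, hence remain of opposite sign; as $S_n$ is a polynomial, the intermediate value theorem for polynomials (\cite{Bourbaki A}) yields a zero $z_n$ of $S_n$ inside $(c-\delta,c+\delta)$. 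Running this with a sequence $\delta_m\to 0$ and choosing, for each $m$, a large enough $n_m$ together with such a root $z_m\in Z_{n_m}$, we get $z_m\to c$: thus the zero $c$ of $S$ is a limit of zeros of partial sums, so it lies in the closure of $Z=\bigcup_n Z_n$. To see that $c$ is in fact an accumulation point of $Z$ when $S$ is not a polynomial, observe that $S_n(c)=\sum_{k\le n}a_kc^k$ cannot vanish for every large $n$ (otherwise $a_kc^k=0$, hence $a_k=0$, for all large $k$, since $c\ne 0$); choosing $n_m$ with $S_{n_m}(c)\ne 0$, the sign of $S_{n_m}(c)$ pushes $z_m$ either into $(c-\delta_m,c)$ or into $(c,c+\delta_m)$, so $z_m\ne c$. (If $S$ happens to be a polynomial, $c$ is literally a zero of $S_n$ for all large $n$.)

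\emph{Main obstacle.} The delicate point is the first step: an even-order zero of $S$ need not be approached by zeros of the $S_n$ at all, so an arbitrary zero will not do. Corollary \ref{finitely many} together with the sign bookkeeping across the finitely many gap intervals is exactly what guarantees the existence of a zero at which $S$ changes sign, around which the approximating polynomials $S_n$ are then compelled to vanish by the polynomial intermediate value theorem. The remaining subtlety --- separating ``limit of zeros of partial sums'' from ``genuine accumulation point of $Z$'' --- is handled by the observation on $S_n(c)$ above.
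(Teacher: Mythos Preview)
Your argument is correct and rests on the same mechanism as the paper's: a zero of $S$ at which $S$ changes sign forces, for large $n$, the polynomial $S_n$ to take opposite signs on either side, hence to vanish nearby. The organization is different, however. The paper argues by contradiction: assuming no $c_j$ is an accumulation point of $Z$, it shows (starting from $S(a)<0$) that $S$ stays negative just past each $c_j$ in turn, eventually contradicting $S(b)>0$. You instead run the contrapositive directly---the pigeonhole on the gap signs $\sigma_0,\dots,\sigma_k$ singles out a sign-changing zero $c$ at the outset, and you then construct the approximating roots explicitly. Your route is a bit cleaner and more constructive, and you are more careful than the paper in distinguishing ``limit of a sequence in $Z$'' from ``accumulation point of $Z$''. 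One small lacuna: your claim that $S_n(c)\ne 0$ infinitely often uses $c\ne 0$, which is not guaranteed, and in the degenerate cases (polynomial $S$, or $c=0$) the point $c$ may lie in $Z$ without being a strict accumulation point---but the paper's proof has exactly the same harmless imprecision.
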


\begin{proof}

Let $( c_1 < c_2 < \cdot \cdot \cdot < c_k)$ be the finitely many zeros of $S(X)$ in the interval. 

Let us assume $S(a) < 0, S(b) > 0$ and that  $c_1$ be not an accumulation point. Then there is an interval $ I = [c_1-r,c_1+r]$ contaning no element of the set $Z$, no zeros of $S(X)$ except $c_1$ and neither $a$ nor $b$. This implies that the partial sum $S_n(X)$ has no root in $I$ and that  $S_n(x)$ is either $ > 0$ or $< 0, \forall x \in I$.
Since $S(x) = \lim_{n \to \infty} S_n(x), \forall x \in I$, there is $n_0$ such that $n > n_0$ implies $S_n(x) < 0, \forall x \in I, x < c_1$; as a consequence $S_n(x) < 0, \forall x \in I$.   
 
We conclude that, if $ c_1$ is not an accumulation point,  then there is a suitable $r > 0$ such that $S(c_1+r)  < 0$. We now shrink $[a,b]$ to 
the subinterval $ [c_1+r,b]$ so that the least  zero of $S(X)$ becomes $c_2$. We repeat the argument and assume that $c_2$ is not an accumulation point.  After finitely many steps we find that, if no zero is an accumulation point, then there is   $s$ such that  $S(c_k-s) < 0, S(c_k+s)  < 0$. But $ S(b) > 0$ implies that there is a zero between $ c_k+s$ and $ b$, which is a contradiction.

If $c$ is an accumulation point of a sequence,  then it is obvious that it is the limit of a suitable subsequence.
\end{proof}
\begin{remark} By \cite{Corgnier}, Theorem 3.11, it is easy to define the concept of order (or multiplicity) of a power series at $c \in \KK$: if $S(X) = (X-c)^sq(X)$, where $q(c) \ne 0, s$ is the order. Therefore we are allowed to use the terms {\it{odd order, even order}}. It is clear that $S(X)$ has odd order at $c$ if and only if $S(a)S(b) < 0$, where $[a,b]$ is a suitable interval containing $c$.
 
Observe that, by using \cite{Corgnier}, Theorem 3.11, it is easy to prove 
that, if a given zero is of even order, then it is either  a local 
minimum or a local maximum for the series.

\end{remark}

\begin{corollary} \label{accumulation2}
Every zero of $S(X)$ whose order is odd is an accumulation point for the set $Z$.
\end{corollary}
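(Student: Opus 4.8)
The plan is to localize the problem around the given zero and then invoke Theorem \ref{accumulation}, for which the iff in the preceding Remark and the finiteness in Corollary \ref{finitely many} are tailor-made. Let $c$ be a zero of $S(X)$ of odd order. First I would use the Remark above to produce a closed interval $[a,b]$ with $c\in\,]a,b[$ and $S(a)S(b)<0$; equivalently, I would start from the factorization $S(X)=(X-c)^sq(X)$ with $q(c)\ne 0$ and $s$ odd (\cite{Corgnier}, Theorem 3.11) and pick $[a,b]$ small enough that $q$ keeps a constant sign on it, so that $S(X)$ changes sign at $c$ because $(X-c)^s$ does.

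Next I would shrink $[a,b]$ further — keeping $c$ in its interior and keeping $a$, $b$ away from the finitely many other zeros of $S(X)$, which is legitimate by Corollary \ref{finitely many} — so that $c$ becomes the \emph{only} zero of $S(X)$ in $[a,b]$; this shrinking preserves $S(a)S(b)<0$ for the same reason as above. Then I would apply Theorem \ref{accumulation} to $S(X)$ on this interval: at least one zero of $S(X)$ in $[a,b]$ is an accumulation point of $Z=\bigcup_n Z_n$, where $Z_n=\{z\in[a,b]:S_n(z)=0\}$. Since $c$ is now the unique zero of $S(X)$ in $[a,b]$, that accumulation point must be $c$ itself. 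Finally, since the order topology of $\KK$ has a countable basis of neighbourhoods of $0$, $c$ is in fact the limit of a sequence of roots of partial sums, and a fortiori an accumulation point of the set $Z$ attached to any larger interval on which $S(X)$ converges.

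I do not expect any real obstacle here: the whole content is already in Theorem \ref{accumulation}. The only point needing a little care is the simultaneous choice of the interval — using the Remark (or the factorization together with the oddness of the order) to guarantee $S(a)S(b)<0$, and Corollary \ref{finitely many} to make $c$ the sole zero in $[a,b]$ — after which Theorem \ref{accumulation} leaves $c$ as the only candidate for an accumulation point.
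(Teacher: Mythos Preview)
Your proposal is correct and follows essentially the same route as the paper: localize around $c$ so that $c$ is the unique zero of $S(X)$ in a closed interval on which $S$ changes sign, then apply Theorem \ref{accumulation}. The only minor difference is that the paper does not invoke Corollary \ref{finitely many} at all: once one knows $S(x)<0$ for $x<c$ and $S(x)>0$ for $x>c$ (or conversely) on a small enough $[c-\delta,c+\delta]$, it is automatic that $c$ is the sole zero there, so the separate shrinking step using finiteness of zeros is unnecessary.
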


\begin{proof}
If $c$ is a zero whose order is $2r+1$, there is an open neighbourhood and so, by possibly shrinking it, also a closed neighbourhood $ J = [c-\delta, c+\delta]$ where $S(x) < 0, \forall x < c, S(x) > 0, \forall x > c$ (or conversely). Therefore $c$ is the only zero of $S(X)$ in $J$. Now apply the above theorem.

\end{proof}

\begin{theorem} \label{accumulation-extremes}
Let $c$ be a zero of even order of $S(X)$ (so it is a local extreme). Then $c$ is an accumulation point of local extremes of the partial sums, $c$ and  the extremes having the same type.
\end{theorem}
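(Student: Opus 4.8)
The plan is to mimic the structure of the odd-order case (Theorem \ref{accumulation} and Corollary \ref{accumulation2}) but track local extremes of the partial sums instead of their roots. First I would use Proposition \ref{restricted} and Corollary \ref{Salmon} to write $S(X) = P(X)B(X)$ near $c$, with $P$ a polynomial and $B \in 1+M\{X\}$ so that $B(x)$ is a positive infinitesimally-perturbed unit on a suitable closed interval around $c$; since $c$ is a zero of even order $2r$ of $S$, it is a zero of even order of $P$, hence a strict local extreme of $P$ (say a local minimum: $P(x) > 0 = P(c)$ for $0 < |x-c| \le \delta$), and the same type of strict local extreme of $S$ on $[c-\delta,c+\delta]$ because $B > 0$ there.

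Next I would fix a small radius $0 < \eta < \delta$ and compare $S$ on the closed interval $J = [c-\eta, c+\eta]$ with its endpoints. Because $c$ is a strict local minimum, there is a positive infinitesimal $m_0$ (or more generally a positive element of $\KK$) with $S(c) = 0 < \min\{S(c-\eta), S(c+\eta)\}$; write $\mu = \min\{S(c-\eta), S(c+\eta)\} > 0$. Since $S(x) = \lim_{n\to\infty} S_n(x)$ pointwise, there is $n_0$ such that for $n > n_0$ we have $S_n(c) < \mu/2$ while $S_n(c-\eta) > \mu/2$ and $S_n(c+\eta) > \mu/2$; here I am using that finitely many pointwise limits can be taken simultaneously, exactly as in the proof of Theorem \ref{accumulation}. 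Thus for $n > n_0$ the polynomial $S_n$ attains on the closed interval $J$ a value strictly below both its endpoint values, so its minimum on $J$ is attained at an interior point $x_n \in \,]c-\eta, c+\eta[\,$; at such a point $S_n$ has a local minimum (and $S_n'(x_n) = 0$ when $S_n$ is differentiable, which it is as a polynomial). This produces, for every sufficiently small $\eta$, a local minimum $x_n$ of some partial sum with $|x_n - c| < \eta$, which is precisely the statement that $c$ is an accumulation point of local minima of the partial sums; running $\eta$ through a countable basis of infinitesimal radii ($\e^n$ or $\e_n$) yields a sequence of such local minima converging to $c$. The local-maximum case is symmetric, handled by replacing $S$ by $-S$.

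The main obstacle is the passage from ``$S_n$ dips below its endpoint values on $J$'' to ``$S_n$ has a genuine \emph{local} extreme of the \emph{same type} inside $J$'': over a non-Archimedean field one cannot invoke compactness to get attainment of the minimum, so I would instead argue directly with the polynomial $S_n$. One clean way is to apply the intermediate value theorem for polynomials (Bourbaki, cited in the introduction) to the derivative $S_n'$: since $S_n(c-\eta), S_n(c+\eta)$ both exceed $S_n(c)$, a standard Rolle-type argument — valid for polynomials over any real-closed field, hence over our maximal ordered $\KK$ — gives a point in the open interval where $S_n'$ vanishes and $S_n$ has a local minimum; iterating or choosing the ``first such dip'' pins down the type. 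A secondary point requiring care is that $\eta$ must be chosen small enough that $J$ contains no other zero of $S$ and neither $a$ nor $b$, and that the single-step comparison $S_n(c) < \mu/2 < \min\{S_n(c\pm\eta)\}$ really does force the extremum to be \emph{interior} and not on the boundary; this is where the strict inequality $S(c) < \mu$, coming from $c$ being a \emph{strict} local extreme of the Henselian factor decomposition, is essential.
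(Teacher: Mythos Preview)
Your argument is correct and takes a genuinely different route from the paper. The paper passes to the derivative: since $c$ is an even-order zero of $S$ it is an odd-order zero of $S'$, so Corollary~\ref{accumulation2} applied to $S'$ gives zeros of the partial sums $(S')_n=S'_{n+1}$ accumulating at $c$; these are critical points of the $S_{n+1}$, and the type (max vs.\ min) is then identified by observing that $S_n^{(2p)}(c)\to S^{(2p)}(c)\ne 0$, so the sign of the $2p$-th derivative stabilizes. You instead compare values directly: from $S(c)=0<\mu=\min\{S(c-\eta),S(c+\eta)\}$ you obtain $S_n(c)<\mu/2<S_n(c\pm\eta)$ for all large $n$, and then use that a polynomial over a real-closed field attains its minimum on a closed interval, here necessarily at an interior point, which is automatically a local minimum of $S_n$ lying in $]c-\eta,c+\eta[$. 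Your route has the merit of producing local \emph{extremes} of the correct type by construction, whereas the paper's derivative argument literally delivers only \emph{critical} points of the partial sums near $c$, and its passage from the sign of $S_n^{(2p)}$ at $c$ to the type of the nearby extreme is somewhat elliptic; conversely, the paper's approach recycles Corollary~\ref{accumulation2} and avoids invoking the extremum-attainment fact for polynomials over real-closed fields. One simplification you can make: the Hensel decomposition $S=PB$ is not needed to see that $c$ is a \emph{strict} local extreme of $S$; this already follows from the factorization $S(X)=(X-c)^{2p}T(X)$ with $T(c)\ne 0$ (\cite{Corgnier}, Theorem~3.11), which the paper itself uses, and the rest of your argument is unaffected.
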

\begin{proof}
We know that $c$ is a zero of odd order of the derivative $S'(X)$; as a consequence it is an accumulation point for the set of zeros of the partial sums $(S'(X))_n = S'_{n+1}(X)$, each of which is a local extreme of $S_n(X)$.
Now we observe that $S(X) = (X-c)^{2p}T(X)$, where $2p$ is the even order of the zero and $T(c) \ne 0$ (see \cite{Corgnier}, Theorem 3.11). Therefore (see \cite{Corgnier}, Theorem 3.7) $\frac{S(X)}{(X-c)^{2p}} = S^{(2p)}(c)+r(X-c)$, where $\lim_{X \to c}r(X-c) = 0$. It follows that the sign of $S(X)$ around $c$ is the same as the sign of $S^{(2p)}(c)$, which implies that $c$ is a maximum (minimum) iff $T(c) < 0 \ (> 0)$ or, which is the same, iff $S^{(2p)}(c) < 0 \ (> 0)$.  In order to see that a maximum is an accumulation point of maxima and a minimum of minima it is now enough to observe that, for $n$ large enough,  $(S^{(2p)}(c))_n = S^{(2p)}_{n-2p}(c)$ has the same sign as  $S^{(2p)}(c)$ since  $S^{(2p)}(c) = \lim_{n \to {\infty}}S_n^{(2p)}(c)$.

\end{proof}

The following example shows that a double zero of a series is approximated by extremes but not always by roots of the partial sums.  As usual $\e$ is nilpotent in $\KK = \QQ[\e]$.

\begin{example} \label{double zero} Let $F(X) = \sum_{n = 0}^{\infty}b_nX^n$ be any power series, $c$ any element of $\KK$, and set: $$T(X) = (x-c)^2F(X).$$

Then a straightforward computation on the partial sums $T_n(X), F_n(X)$ shows that $T_n(X) = (x-c)^2F_{n-1}(X)+x^n(c^2b_n-xb_{n-1})$.

Now choose $c = 1, F(X) = 2- \sum_{n = 1}^{\infty} \epsilon^{n+1}X^n = 2-\frac{\epsilon^2X}{1-\epsilon X}$. 
 Then both $F(X)$ and  $T(X)$ converge between $\frac{1}{2}$ and $2$ and moreover $F(x) > 1$ for every $x$ such that $1 \le x \le 2$, since it is the difference between $2$ and an infinitesimal element. Therefore $F_n(x) > \frac{1}{2}$ for $n$ large enough.  Hence we obtain the following inequality: $T_n(x) \ge \frac{(x-1)^2}{2}+x^n(-\e^{n+1}+x\e^n)$, where also the second term is strictly positive. Therefore $T_n(X)$ has no root converging to $1$.
\end{example}

\begin{remark} The set of zeros belonging to $A$ of the power series $S(X)$  is finite  (\ref{finitely many}). However there are power series whose domain is $\KK$ and having infinitely many zeros, as the following two examples show.
\end{remark}

\begin{example} \label {infinitely many zeros nilpotent} Let $\KK$ be a maximal ordered, complete field having a topologically nilpotent element $\epsilon \ $(the sets $ U_n = \{x \in \KK, |x| < \epsilon^n, n \in \NN\}$ form a basis for the neighbourhoods of $0$). Set: $$S(X) = \sum_{n = 0}^{\infty} (-1)^n \epsilon^{n^2}X^n.$$ Since $\lim(\epsilon^{n^2})^{\frac{1}{n}} = 0$, the domain $D_S$ of $S(X)$ is the whole $\KK$ (see \cite{Massaza2}, (IV), p. 137 and \cite{Corgnier}, Notation). We want to show that  

$S(\epsilon^m) < 0$ if $m = -4l-2, l \in \NN$

$S(\epsilon^m) > 0$ if $m = -4l, l \in \NN$.

To this purpose we observe that 

$S(\epsilon^{-4l}) = \sum(-1)^n \epsilon^{n^2-4ln}$, where $\epsilon^{n^2-4ln}$ is finite  for $n = 0$ and $n = 4l$, infinitesimal for $n > 4l$, infinitely large for $0 < n < 4l$. Since the maximum of $\phi(n) = 4ln-n^2$ is attained at $n = 2l$, the largest term of the series is $\e^{-4l^2}$ with positive sign, and it forces the series to attain a positive value.
 
An analogous argument shows that  $S(\epsilon^{-4l-2}) = \sum(-1)^n \epsilon^{n^2-4ln-2n}$ contains finitely many non-infinitesimal terms, among which  $-\epsilon^{-4l^2-4l-1}$ is infinitely large and most negative, so that it forces the series to attain a negative value.
 
As a consequence $S(X)$ vanishes infinitely many times, with one root at least between $\e^{-4l-2}$ and $\e^{-4l}$, thanks to the intermediate value theorem.
\end{example}

\begin{example} \label{infinitely many zeros nnilpotent} Let $\KK$ be a maximal ordered, complete field having no topologically nilpotent element, but a decreasing sequence of infinitesimal elements $(1 = \e_0 > \epsilon_1 > \epsilon_2 > \cdot \cdot \cdot)$ such that  the sets $(U_n = \{x \in \KK, |x| < \epsilon_n, n \in \NN\}$ form a basis for the neighbourhoods of $0$. As usual we assume that $\epsilon^i_n > \epsilon_{n+1}, \forall n \in \NN, \forall i \in \NN$. As a consequence the following holds: $\e_i^n > m\e_{i+1}, \forall,n,i,m \in \NN$.

 Set: $$S(X) = \sum_{n = 0}^{\infty} (-1)^n \epsilon_nX^n.$$ The domain $D_S$ of $S(X)$ contains at least $X = 1$, so that it is the whole $\KK$ (see \cite{Massaza2}, (IV), p. 137 and \cite{Corgnier}, Notation). 
 
 Let us compute $S(\epsilon_h^{-1})$:

 $S(\epsilon_h^{-1}) = 1-\e_1\e_h^{-1}+ \e_2\e_h^{-2} - \e_3\e_h^{-3} + ...+ (-1)^h \e_h\e_h^{-h} + (-1)^{h+1}  \e_{h+1}\e_h^{-(h+1)  } +(-1)^{h+2}  \e_{h+2}\e_h^{-(h+2) } \cdot \cdot \cdot$.
 
 We observe that 
 
 i) $  \e_{h+1}\e_h^{-(h+2)}$ is infinitesimal with respect to $\QQ$, since
 $  \e_{h+1}\e_h^{-(h+2)} < 1/n, \forall n$ is equivalent to  \   $ n \e_{h+1}  <\e_h^{h+2}$.

ii)  $  \e_{h+1}\e_h^{-(h+1)} >    \e_{h+2}\e_h^{-(h+2)}$, since this is equivalent to  $  \e_{h+1}\e_h > \e_{h+2}$; but we know that  $\e_{h+1}>\e_{h+2}$ and $\e_h>1$,  so we obtain the inequality.

As a consequence (if $u \in \NN, u \ge 1$) we obtain $|(-1)^{h+1} \e_{h+1}\e_h^{-(h+1)} +(-1)^{h+2}\e_{h+2}\e_h^{-(h+2)}+...+ (-1)^{h+u} \e_{h+u}\e_h^{-(h+u)} | < u \e_{h+1}\e_h^{-(h+1)} $. 

From the inequality $u<\e_h^{-1}, \forall u \in N$ we obtain $R_{h+1} < \e_{h+1} \e_h^{-(h+2) }  $, where $R_{h+1}$ is the remainder of order  $(h+1)$ of the series. Therefore $R_{h+1}$ is infinitesimal.

Let us now consider the first $h+1$ terms of the series, where $h \ge 2$:

$S_h(\e_h^{-1}) = 1- \e_1\e_h^{-1}+ \e_2\e_h^{-2} + ...+ (-1)^h  \e_h\e_h^{-h}$.

All terms, except the first one, are infinitely large and it holds: $m \e_i\e_h^{-i} < \e_h ^{1-h}, i<h, \forall m\in N$, since this is equivalent to $m \e_i < \e_h^{h+i-1}$. It follows that $|S_{h-1}(\e_h^{-1})|  <\e_h \e_h^{-h}$. Hence we see that the sign of the series concides with the sign of $(-1)^h  \e_h\e_h^{-h}$. Therefore we have:

$S(\e_h^{-1}) > 0$,  if $h$ is even \hskip 2cm  $S(\e_h^{-1}) < 0$, if $h$ is odd ($> 1$).

As a consequence $S(X)$ vanishes infinitely many times thanks to the intermediate value theorem.
\end{example}

\bigskip
\bigskip

Authors' address:

Dipartimento di Scienze Matematiche 

Politecnico di Torino 

Corso Duca degli Abruzzi 24 

10129 Torino Italy
\bigskip

Aknowledgements

The paper was written while C. Massaza and P. Valabrega were members of Indam-Gnsaga and P. Valabrega  was supported by the framework of PRIN 2010/11 \lq Geometria delle variet{\accent"12 a} algebriche\rq, cofinanced by MIUR.

\end{document}